\theoremstyle{plain}
\newtheorem{thm}{Theorem}
\newtheorem{prop}[thm]{Proposition}
\newtheorem{conj}[thm]{Conjecture}
\theoremstyle{definition}
\newtheorem{ex}[thm]{Example}
\theoremstyle{remark}
\numberwithin{equation}{section}
\begin{document}
\title[The James Function]
{The James Function}
\author[C. N. B. Hammond, W. P. Johnson, and S. J. Miller]{Christopher N. B. Hammond, Warren P. Johnson, and Steven J. Miller}
\thanks{The third-named author was partially supported by NSF grant DMS1265673.}
\date{October 23, 2014}
\address{Department of Mathematics\\
Connecticut College\\
New London, CT 06320}
\email{cnham@conncoll.edu}
\email{wpjoh@conncoll.edu}
\address{Department of Mathematics and Statistics\\
Williams College\\
Williamstown, MA 01267}
\email{sjm1@williams.edu}
\keywords{James function, Bradley--Terry model, sabermetrics}
 
\maketitle
\thispagestyle{empty}

\section{Introduction}\label{S:intro}

In his 1981 \textit{Baseball Abstract} \cite{james}, Bill James posed the following problem:
suppose two teams $A$ and $B$ have winning percentages $a$ and $b$ respectively, having played
equally strong schedules in a game such as baseball where there are
no ties.  If $A$ and $B$ play each other, what is the probability $p(a,b)$ that $A$ wins?

This question is perhaps more relevant to other sports, because in baseball the outcome is particularly
sensitive to the pitching matchup.  (In 1972, the Philadelphia Phillies won 29 of the 41 games 
started by Steve Carlton, and 30 of the 115 games started by their other pitchers.)   The
answer is quite interesting, even if its applicability is somewhat limited by the tacit assumption of uniformity.

For $0<a<1$ and $c>0$, define $q_{c}(a)$ by
\begin{equation}\label{E:talent}
a=\frac{q_{c}(a)}{q_{c}(a)+c}\text{.}
\end{equation}
James calls $q_{\frac12}(a)$ the log5 of $a$, and does not consider any other values of $c$.  Under the assumption of uniformity, he claims that $p(a,b)$ would be given by the function
\begin{equation}\label{E:log5}
P(a,b)=\frac{q_{\frac12}(a)}{q_{\frac12}(a)+q_{\frac12}(b)}\text{.}
\end{equation}
In this context, we take uniformity to mean that a team's likelihood of defeating another team is determined only by their winning percentages. For example, this assumption
ignores the impact of the starting pitchers and precludes the situation where one team has a tendency to do particularly well or particularly poorly against another team.

This technique is sometimes called the log5 method of calculating $p(a,b)$, although we will avoid using this name as
there is nothing obviously logarithmic about it.  
It is easy to see from (\ref{E:talent}) that
\begin{equation*}
q_{c}(a)=\frac{ca}{1-a}\text{.}
\end{equation*}
Substituting this expression into (\ref{E:log5}), we see that
\begin{equation}\label{jamesfunction}
P(a,b)=\frac{a(1-b)}{a(1-b)+b(1-a)}\text{,}
\end{equation}
not only for $c=\frac{1}{2}$ but for any positive $c$.
The explicit form of $P(a,b)$ was first given 
by Dallas Adams \cite{james}, who also christened it the \textit{James function}.
It makes sense to extend the James function to values of $a$ and $b$
in the set $\{0,1\}$, except when $a=b=0$ or $a=b=1$.
In these two cases, we would not have expected to be able to make predictions based on
winning percentages alone.  Moreover, both cases would be impossible if the two
teams had previously competed against each other.

\bigskip

James's procedure can be interpreted as a twofold application of the general method known as the \textit{Bradley--Terry model}
(or sometimes the \textit{Bradley--Terry--Luce model}).
If $A$ and $B$ have worths $w(A)$ and $w(B)$ respectively, the probability that $A$
is considered superior to $B$ is
\[
\pi(A,B)=\frac{w(A)}{w(A)+w(B)}\text{.}
\]
Despite the attribution of this model to Bradley and Terry \cite{bradleyterry} and to Luce \cite{luce},
the basic idea dates back to Zermelo \cite{zermelo}.  The question,
of course, is how to assign the ``right" measure for the worth of $A$ in a particular setting.  In chess, for instance, it is common to express
the worth of  a player as $10^{R_{A}/400}$, where $R_{A}$ denotes the player's Elo rating (see \cite{glickmanjones}).  
(The rating of chess players is the question in which Zermelo was originally interested.
Good \cite{good}, who also considered this problem, seems to have been
the first to call attention to Zermelo's paper.)
Another example is James's so-called Pythagorean model
(introduced in \cite[p.\ 104]{james0} and discussed further in \cite{miller1})
for estimating a team's seasonal winning percentage, based on the number $R$ of runs it scores and the number $S$ of runs it allows.
In this case, the worth of the team is $R^{2}$ and the worth of its opposition is $S^{2}$.

In the construction of the James function, we can view the measure of a team's worth as being obtained from the Bradley--Terry model itself.  We begin by
assigning an arbitrary worth $c>0$ (taken by James to be $\frac{1}{2}$) to a team with winning percentage $\frac{1}{2}$.  Equation (\ref{E:talent})
can be construed as an application of the Bradley--Terry model, where the worth of a team
is determined by the assumption that its overall winning percentage is equal to its probability of defeating a team with winning percentage $\frac{1}{2}$.
Equation (\ref{E:log5}) represents a second application of the Bradley--Terry model, where each team has an arbitrary winning percentage
and the measure of its worth comes from the previous application of the model.

This area of study, which is usually called the theory of paired comparisons,
has focused from the outset on the question of inferring worth from an incomplete set of outcomes \cite{zermelo}.  (See
\cite{david} for a thorough treatment,
as well as \cite{glickman} and \cite{stob} for additional context.)
James, on the other hand, takes the worths to be known and uses them to determine the probability of the outcomes.
We will adopt a similar point of view, emphasizing a set of axiomatic principles rather than a specific model.

\bigskip

James's justification \cite{james} for his method does not invoke the Bradley--Terry model,
but rather the fact that the resulting function $P(a,b)$ satisfies six self-evident conditions:

\begin{enumerate}
\item $P(a,a)=\frac12$.
\item $P(a,\frac12)=a$.
\item If $a>b$ then $P(a,b)>\frac12$, and if $a<b$ then $P(a,b)<\frac12$.
\item If $b<\frac12$ then $P(a,b)>a$, and if $b>\frac12$ then $P(a,b)<a$.
\item $0\le P(a,b)\le 1$, and if $0<a<1$ then $P(a,0)=1$ and $P(a,1)=0$.
\item $P(a,b)+P(b,a)=1$.
\end{enumerate}
Condition (1) pertains to the situation where two different teams have the same winning percentage
(as opposed to a single team competing against itself).  To avoid contradicting (5), condition (4) should exclude the cases where $a=0$ and $a=1$.
We will call this set, with this slight correction, the \textit{proto-James conditions}.
(James originally referred to them as ``conditions of logic.")
In addition to presenting some empirical evidence for (\ref{E:log5}), James makes
the following assertion.

\begin{conj}[1981]
The James function $P(a,b)$ is the only function that satisfies all six of the proto-James conditions.
\end{conj}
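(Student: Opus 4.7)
My first move is to catalog exactly what information the six conditions carry. Condition (2) fixes $P$ along $\{b=1/2\}$, and (6) pushes this to $\{a=1/2\}$; condition (5) fixes $P$ on the four edges of $[0,1]^2$ (away from the two forbidden corners); condition (1) fixes $P$ on the diagonal; and condition (6) relates $P(a,b)$ to $P(b,a)$. Thus the \emph{equalities} in (1)--(6) determine $P$ only on the union of the boundary with the three lines $\{a=b\}$, $\{a=1/2\}$, and $\{b=1/2\}$ --- a one-dimensional subset of the two-dimensional square. Conditions (3) and (4) contribute only strict inequalities on open sets. Since no axiom in the list relates $P(a,b)$ to $P(a',b')$ at two generic, unrelated interior points, I immediately doubt the conjecture: the hypotheses seem too weak to propagate the pinned values into the interior.

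My plan is therefore to pivot to constructing a counterexample of the form $Q(a,b)=P(a,b)+\epsilon\,h(a,b)$, where $h$ is chosen to vanish on the union of curves described above, to be antisymmetric in its arguments, and to be small enough to preserve the strict inequalities (3) and (4). The candidate
\begin{equation*}
h(a,b)=(2a-1)(2b-1)(a-b)\,ab(1-a)(1-b)
\end{equation*}
satisfies every vanishing requirement by inspection, and the interchange $(a,b)\leftrightarrow(b,a)$ flips only the sign of the factor $(a-b)$, so $h$ is antisymmetric. Consequently $Q$ satisfies (1), (2), (5), and (6) for every real $\epsilon$.

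The remaining step is to verify (3), (4), and the range constraint of (5) for $|\epsilon|$ sufficiently small, and this is the only delicate part of the plan. A short computation shows that $P(a,b)-\tfrac12$ has a factor of $(a-b)$ in its numerator and $P(a,b)-a$ has a factor of $(1-2b)$, both of which are also factors of $h$; after cancelling them, the ratios $h/(P-\tfrac12)$ and $h/(P-a)$ are continuous and bounded on $[0,1]^2$. Taking $\epsilon$ small enough to control these bounded ratios keeps $Q$ in the correct sign regions and inside $[0,1]$. The main obstacle is precisely this cancellation near the diagonal and the line $b=1/2$, where both the gap $P-\tfrac12$ (respectively $P-a$) and the perturbation $h$ vanish together; the factor structure of $h$ is engineered to make this cancellation safe. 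The plan therefore ends not with a proof but with a disproof, and the natural follow-up --- which I expect the paper to pursue --- is to identify an additional axiom (continuity alone will not rescue uniqueness, so presumably some multiplicative or Bradley--Terry-style consistency condition) strong enough to single out the James function.
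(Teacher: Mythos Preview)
Your disproof is correct and the perturbation $Q=P+\epsilon h$ with
\[
h(a,b)=(2a-1)(2b-1)(a-b)\,ab(1-a)(1-b)
\]
does the job: the factorizations $P(a,b)-\tfrac12=\dfrac{a-b}{2D}$ and $P(a,b)-a=\dfrac{a(1-a)(1-2b)}{D}$ (where $D=a+b-2ab$) make your cancellation argument go through cleanly, and the bounds $1/D\geq 1$ together with the obvious bounds on the remaining polynomial factors give explicit ranges of admissible $\epsilon$.

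The paper takes a different route. Rather than perturbing $P$, it builds a counterexample from scratch by prescribing a family of piecewise-linear level curves in the unit square and reading off the resulting function explicitly on four triangular regions. That construction is tailored to exhibit a \emph{non-involutive} Jamesian function (one for which $J\bigl(a,J(a,b)\bigr)\neq b$), which then motivates the paper's subsequent study of the involutive subclass; the price is that the example is only continuous and fails to be differentiable along the anti-diagonal. Your perturbation, by contrast, produces a whole one-parameter family of $C^{\infty}$ counterexamples with essentially no effort, and (though you do not say so) for small $\epsilon$ it even satisfies the paper's stronger monotonicity condition~(e), since $\partial P/\partial a=b(1-b)/D^{2}$ and $\partial h/\partial a$ share the factor $b(1-b)$. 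What your approach does not immediately reveal is whether involutivity survives the perturbation; it does not, but checking that requires a separate calculation, whereas the paper's geometric example makes the failure of involutivity transparent. Your closing guess is on target: the paper shows that adding the Bradley--Terry hypothesis restores uniqueness, while continuity (and even smoothness, as your own example demonstrates) does not.
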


\noindent Jech \cite{jech} independently proposed a similar, albeit shorter list of conditions.  Although he did not consider the James conjecture,
he was able to prove a uniqueness theorem pertaining to a related class of functions.

The purpose of this paper is to examine the mathematical theory underlying the James function 
and to demonstrate that the James conjecture is actually false.  In fact, we will introduce
and study a large class of functions that satisfy the proto-James conditions.\bigskip

While the proto-James conditions are certainly worthy of attention, we prefer to work with a slightly different set.
The following conditions apply to all points $(a,b)$ with $0\leq a\leq 1$ and $0\leq b\leq 1$, except for $(0,0)$ and $(1,1)$: 

\begin{enumerate}[label=(\alph*)]
\item $P(a,\frac12)=a$.
\item $P(a,0)=1$ for $0<a\leq 1$.
\item $P(b,a)=1-P(a,b)$.
\item $P(1-b,1-a)=P(a,b)$.
\item $P(a,b)$ is a non-decreasing function of $a$ for $0\leq b\leq 1$ and a strictly increasing function of $a$ for $0<b<1$.
\end{enumerate}
We shall refer to conditions (a) to (e) as the \textit{James conditions}.  Condition (d), which
is not represented among the proto-James conditions, simply states that the whole theory could be reformulated using losing percentages 
rather than winning percentages, with the roles of the two teams reversed.  Together with condition (c),
it is equivalent to saying $P(1-a,1-b)=1-P(a,b)$, which may seem more natural to some readers.
It should be clear from (\ref{jamesfunction}) that the James function satisfies James conditions
(a) to (d).  We will verify condition (e) in Section \ref{S:adams}.
 
It is fairly obvious that the James conditions imply the proto-James conditions.
Condition (a) is identical to condition (2).  Condition (c) is condition (6), which implies (1) by taking $b=a$.  Condition 
(e) is stronger than (3) and (4), and in concert with (1) and (2) implies them both.  Combined with 
(c) or (d), it also implies that $P(a,b)$ is a non-increasing function of $b$ for $0\leq a\leq 1$ and a strictly decreasing function of $b$
for $0<a<1$.  Finally, (b) implies the second of the three parts of
(5).  Together with (c), it also implies that $P(0,b)=0$ if $0<b\leq 1$.  By taking 
$b=0$ in (d) and replacing $1-a$ with $b$, condition (b) further implies that $P(1,b)=1$ if $0\leq b<1$, and this
together with (c) gives $P(a,1)=0$ for $0\leq a<1$, which is (a hair stronger
than) the third part of (5).  These facts, combined with (e), show that $0<P(a,b)<1$ when $0<a<1$ and $0<b<1$, which implies the first part of (5).

We will focus our attention on functions that satisfy the James conditions, and hence also
the proto-James conditions.  See \cite{supplement}, the online supplement to this paper, for an example of a function that satisfies the proto-James conditions but not the James conditions.
 
\section{Verification of the James Function}\label{S:miller}

While the Bradley--Terry model is practically ubiquitous, its applicability to this situation is not obvious from an axiomatic perspective. We now present a self-contained proof that, under an intuitive probabilistic model in which $a$ and $b$ are the probabilities of success in simultaneous Bernoulli trials, the James function $P(a,b)$ represents the probability $p(a,b)$. This model satisfies the assumption of uniformity discussed in Section \ref{S:intro}. The following argument was discovered by the third-named author several years ago \cite{miller}, but has not previously appeared in a formal publication.

\begin{thm}\label{millertime}
The probability $p(a,b)$ that a team with winning percentage $a$ defeats a team with winning
percentage $b$ is given by the James function
\[
P(a,b)=\frac{a(1-b)}{a(1-b)+b(1-a)}\text{,}
\]
except when $a=b=0$ or $a=b=1$, in which case $p(a,b)$ is undefined.
\end{thm}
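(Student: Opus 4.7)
The plan is to make the phrase ``probabilities of success in simultaneous Bernoulli trials'' precise and then extract $p(a,b)$ by a routine conditioning argument. I would begin by defining the following game: at each stage, team $A$ independently ``succeeds'' with probability $a$ and team $B$ independently ``succeeds'' with probability $b$. If exactly one team succeeds, that team is declared the winner; if both succeed or both fail, the stage is inconclusive and the trial is repeated. Before turning to the calculation I would check that this model is consistent with the interpretation of $a$ and $b$ as winning percentages: pitting team $A$ against an ``average'' opponent with winning percentage $\tfrac12$, a one-line computation gives $p(a,\tfrac12) = a$, which is the least one could ask of such a model.

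The calculation of $p(a,b)$ itself is then almost immediate. Let $r = ab + (1-a)(1-b)$ denote the probability that a single stage is inconclusive, so that $1-r = a(1-b) + (1-a)b$ is the probability that it is decisive. One checks $r < 1$ unless $a=b=0$ or $a=b=1$, so apart from those two excluded cases the game terminates almost surely. Conditioning on the first decisive stage (or, equivalently, summing the geometric series over $n \geq 0$ inconclusive stages followed by an $A$-wins stage) yields
\[
p(a,b) = \sum_{n=0}^{\infty} r^{n}\, a(1-b) = \frac{a(1-b)}{1-r} = \frac{a(1-b)}{a(1-b)+b(1-a)},
\]
which is precisely $P(a,b)$.

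Finally, I would comment briefly on the excluded cases. If $a=b=0$, then both teams always fail and every stage is inconclusive; if $a=b=1$, both teams always succeed and again no stage ever resolves. In either case the geometric series collapses to a $0/0$ indeterminate form, which accounts for $p(a,b)$ being undefined there. The principal obstacle, such as it is, lies not in any of these computations but in defending the choice of probabilistic model: the argument shows that the James function emerges from this particular interpretation of uniformity, not that it is forced by the bare hypothesis that $a$ and $b$ are winning percentages. I would therefore present the Bernoulli-trial model as a natural axiom, vindicated a posteriori by the identity $p(a,\tfrac12)=a$, rather than attempt to derive it from more primitive principles.
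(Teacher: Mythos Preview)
Your proposal is correct and follows essentially the same route as the paper: set up the repeated simultaneous Bernoulli trial, identify the tie probability $r=ab+(1-a)(1-b)$, and sum the geometric series (the paper also records the equivalent functional equation $p=a(1-b)+rp$ as an alternative). Your additional sanity check $p(a,\tfrac12)=a$ and your closing caveat about the model being an assumption rather than a theorem are welcome glosses but do not change the underlying argument.
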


\begin{proof}  Let teams $A$ and $B$ have winning percentages $a$ and $b$ respectively.
Independently assign to each of $A$ and $B$ either
a $0$ or $1$, where $A$ draws 1 with probability $a$ and $B$ draws $1$ with 
probability $b$.  If one team draws $1$ and the other $0$, the team with $1$ wins the competition.  
If both teams draw the same number, repeat this procedure until they draw different numbers.

The probability that $A$ draws 1 and $B$ draws $0$ on any given turn is clearly $a(1-b)$,
while the opposite occurs with probability $b(1-a)$.
The probability that $A$ and $B$ both draw $1$ is $ab$, and the probability that they
both draw $0$ is $(1-a)(1-b)$.  Hence
\begin{equation}\label{E:total}
ab+(1-a)(1-b)+a(1-b)+b(1-a)=1\text{.}
\end{equation}
It follows that $0\le ab+(1-a)(1-b)\le 1$ and
$0\le a(1-b)+b(1-a)\le 1$ whenever $0\leq a\leq 1$ and $0\leq b\leq 1$.

We can conclude the argument in either of two ways.  Since the probability that $A$ and $B$ draw 
the same number is $ab+(1-a)(1-b)$, in which case they draw again, $p(a,b)$ must 
satisfy the functional equation
\[
p(a,b)=a(1-b)+\left[ab+(1-a)(1-b)\right]p(a,b)\text{.}
\]
The only case in which we cannot solve for $p(a,b)$ is when $ab+(1-a)(1-b)=1$.  By (\ref{E:total}), this situation only 
occurs when $a(1-b)+b(1-a)=0$, which implies that either $a=b=0$ or $a=b=1$.  Otherwise,
$p(a,b)=P(a,b)$.

Alternatively, we may observe that the probability that $A$ wins on the $n$th trial is 
\[
a(1-b)\left[ab+(1-a)(1-b)\right]^{n-1}\text{,}
\]
and so the probability that $A$ wins in at most $n$ trials is
\[
a(1-b)\sum_{k=1}^n\left[ab+(1-a)(1-b)\right]^{k-1}\text{.}
\]
As $n$ tends to $\infty$, this expression yields a convergent geometric series unless $ab+(1-a)(1-b)=1$.
Using (\ref{E:total}), we again obtain the James function.
\end{proof}

This proof relies on a particular model for the relationship between winning percentages and the outcome of a competition. Under different assumptions about this relationship, it seems possible that we would obtain other approximations for $p(a,b)$.  Any such function would presumably also satisfy the James conditions.

\section{Properties of the James function}\label{S:adams}

In this section,  we will consider several important properties of the James function.
We begin by computing the partial derivatives of $P(a,b)$, which will lead to an observation originally due to Dallas
Adams.  Note that
\begin{equation}\label{partial1}
\frac{{\partial}P}{{\partial}a}=\frac{b(1-b)}{\left[a(1-b)+b(1-a)\right]^2}\text{,}
\end{equation}
which shows that the James function satisfies condition (e), and also
\begin{equation}\label{partial2}
\frac{{\partial}P}{{\partial}b}=\frac{-a(1-a)}{\left[a(1-b)+b(1-a)\right]^2}\text{.}
\end{equation}
Furthermore, we have
\[
\frac{{\partial}^2P}{{\partial}a^2}=\frac{-2b(1-b)(1-2b)}{\left[a(1-b)+b(1-a)\right]^3}\text{,}
\]
so that, as a function of $a$, it follows that $P(a,b)$ is concave up for $\frac12<b<1$ and concave
down for $0<b<\frac12$.  Similarly,
\[
\frac{{\partial}^2P}{{\partial}b^2}=\frac{2a(1-a)(1-2a)}{\left[a(1-b)+b(1-a)\right]^3}\text{.}
\]

Adams makes an interesting remark relating to the mixed second partial derivative
\begin{equation}\label{E:mixed}
\frac{{\partial}^2P}{{\partial}a{\partial}b}=\frac{a-b}{\left[a(1-b)+b(1-a)\right]^3}\text{.}
\end{equation}
It follows from (\ref{E:mixed}) that $\frac{{\partial}P}{{\partial}a}$, viewed as a function of $b$, is increasing for
$b<a$ and decreasing for $b>a$, so it is maximized as a function of $b$ when $b=a$.
Since $\frac{{\partial}P}{{\partial}a}$ is positive for every $0<b<1$, it must be most positive when $b=a$.
Alternatively, (\ref{E:mixed}) tells us that $\frac{{\partial}P}{{\partial}b}$, viewed as a function of $a$, is 
increasing for $a>b$ and decreasing for $a<b$, so it is minimized as a function of $a$ when 
$a=b$.  Since $\frac{{\partial}P}{{\partial}b}$ is negative for every $0<a<1$, we conclude that
it is most negative when $a=b$.

Adams interprets these facts in the following manner:  since $P(a,b)$ increases most rapidly with $a$
when $b=a$ (and decreases most rapidly with $b$ when $a=b$), one should field one's
strongest team when playing an opponent of equal strength \cite{james}.  Once again, this observation is perhaps 
more interesting in sports other than baseball, where the star players (other than 
pitchers) play nearly every game when healthy, although James points out that Yankees 
manager Casey Stengel preferred to save his ace pitcher, Whitey Ford, for the strongest opposition.
It seems particularly relevant to European soccer, where the best teams engage in several different
competitions at the same time against opponents of varying quality, and even the top players 
must occasionally be rested.

\bigskip

In principle, there are two ways to increase the value of $P(a,b)$:  by increasing $a$ or by decreasing $b$.
Under most circumstances, a team can only control its own quality and not that of its opponent.  There are some
situations, however, such as the Yankees signing a key player away from the Red Sox,
where an individual or entity might exercise a degree of control over both teams.   Similarly, there are many two-player games
(such as Parcheesi and backgammon) in which each player's move affects the position of both players.  In any such setting,
it is a legitimate question whether the priority of an individual or team should be to improve its own standing or to
diminish that of its adversary.

Recall that the gradient of a function signifies the direction of the greatest rate of increase.
The next result, which has apparently escaped notice until now, follows directly from equations (\ref{partial1}) and (\ref{partial2}).

\begin{prop}\label{gradient}
For any point $(a,b)$, except where $a$ and $b$ both belong to the set $\{0,1\}$, the gradient of the James function $P(a,b)$ is a positive multiple of the vector
\begin{equation*}
\langle b(1-b),-a(1-a)\rangle\text{.}
\end{equation*}
In other words, to maximize the increase of $P(a,b)$, the optimal ratio of the increase of $a$ to the decrease of $b$ is $b(1-b):a(1-a)$.
\end{prop}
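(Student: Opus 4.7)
The plan is to assemble the gradient by direct substitution from the partial derivative formulas already computed earlier in this section. From (\ref{partial1}) and (\ref{partial2}) I would write
\[
\nabla P(a,b)=\left\langle\frac{\partial P}{\partial a},\ \frac{\partial P}{\partial b}\right\rangle=\frac{1}{[a(1-b)+b(1-a)]^2}\,\bigl\langle b(1-b),\ -a(1-a)\bigr\rangle,
\]
which already exhibits $\nabla P$ as the scalar multiple $[a(1-b)+b(1-a)]^{-2}$ of the asserted vector $\langle b(1-b),-a(1-a)\rangle$. The only substantive task that remains is to confirm that this scalar is strictly positive throughout the domain specified in the hypothesis.

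For this I would set $D(a,b):=a(1-b)+b(1-a)$ and observe that on $[0,1]^2$ both summands are nonnegative, so $D(a,b)=0$ forces $a(1-b)=0$ and $b(1-a)=0$ simultaneously. A short case check shows this happens precisely at $(a,b)=(0,0)$ and $(a,b)=(1,1)$, which are the two points where $P$ is itself undefined and hence are excluded. At the remaining two corners $(0,1)$ and $(1,0)$ the denominator equals $1$ and the formula is valid, but the vector $\langle b(1-b),-a(1-a)\rangle$ is itself zero; this appears to be precisely why the hypothesis excludes all of $\{0,1\}\times\{0,1\}$ rather than only the two points where $P$ is undefined, so that the phrase ``positive multiple'' can be taken literally.

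The second sentence of the proposition then follows from the standard fact that $\nabla P$ points in the direction of greatest first-order increase of $P$. Among infinitesimal perturbations $\langle da,db\rangle$ of fixed magnitude, the maximum increase in $P$ occurs when $\langle da,db\rangle$ is positively parallel to $\nabla P$; taking $da>0$ and $-db>0$ and reading off the component ratio gives $da:(-db)=b(1-b):a(1-a)$, as claimed. The whole argument is routine, and I expect no real obstacle — the only care needed is the bookkeeping check for where $D$ vanishes and the observation that the two ``bad but well-defined'' corners yield a zero gradient rather than a positive multiple.
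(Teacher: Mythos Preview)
Your argument is correct and is exactly what the paper intends: the sentence preceding the proposition states that the result ``follows directly from equations (\ref{partial1}) and (\ref{partial2}),'' and you have simply spelled out that direct computation together with the positivity check on the scalar $[a(1-b)+b(1-a)]^{-2}$. Your remark that the corners $(0,1)$ and $(1,0)$ give a zero gradient---so that the ``positive multiple'' and ``optimal ratio'' language becomes vacuous there---nicely explains why the hypothesis excludes all four corners rather than just the two where $P$ is undefined.
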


One consequence of this result is that when two teams have identical winning percentages,
the optimal strategy for increasing $P(a,b)$ is to increase $a$ and to decrease $b$ in equal measure.
The same fact holds when two teams have complementary winning percentages.
In all other situations, the maximal increase of $P(a,b)$ is achieved by increasing $a$ and decreasing $b$ by different amounts,
with the ratio tilted towards the team whose winning
percentage is further away from $\frac{1}{2}$.  In the extremal cases, when one of the two values $a$ or $b$ belongs
to the set $\{0,1\}$, the optimal strategy
is to devote all resources to changing the winning percentage of the team that is either perfectly good or perfectly bad.   This observation is
somewhat vacuous when $a=1$ or $b=0$, since $P(a,b)$ is already as large as it could possibly be,
although the strategy is entirely reasonable when $a=0$ or $b=1$.  It also makes sense that the
gradient is undefined at the points $(0,0)$, $(0,1)$, $(1,0)$, and $(1,1)$, since these winning percentages do
not provide enough information to determine how much one team must improve to defeat the other.

\bigskip

If $P(a,b)=c$, it is easy to see that $a(1-b)(1-c)=(1-a)bc$, which implies the next result.

\begin{prop}\label{involution}
If $0<a<1$, then $P(a,b)=c$ if and only if $P(a,c)=b$.  In other
words, for a fixed value of $a$, the James function is an involution.
\end{prop}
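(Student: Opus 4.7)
The plan is to exploit the explicit formula for the James function given in Theorem \ref{millertime} to produce an algebraic identity that is manifestly symmetric in $b$ and $c$. Starting from $P(a,b)=c$ and writing
\[
\frac{a(1-b)}{a(1-b)+b(1-a)}=c,
\]
I would clear denominators to obtain $a(1-b)=c\bigl[a(1-b)+b(1-a)\bigr]$, which rearranges to
\[
a(1-b)(1-c)=(1-a)\,bc.
\]
This is precisely the relation flagged in the sentence immediately preceding the proposition, and its virtue is that interchanging $b$ and $c$ leaves both sides unchanged. Consequently, the identity is equivalent to the corresponding equation with the roles of $b$ and $c$ swapped, and running the derivation in reverse yields $P(a,c)=b$. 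This gives the biconditional on the open region $0<b<1$.

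The second step is to address the boundary values of $b$ and $c$. Since $0<a<1$, the denominator $a(1-b)+b(1-a)$ vanishes only when $b\in\{0,1\}$, and in those cases the extensions from condition (b) and from the discussion in Section \ref{S:intro} give $P(a,0)=1$ and $P(a,1)=0$. So if $b=0$ then $c=1$, and we verify directly that $P(a,1)=0=b$; if $b=1$ then $c=0$, and $P(a,0)=1=b$. This exhausts all admissible inputs.

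I do not foresee a genuine obstacle here: the whole argument is a one-line algebraic observation combined with a check of the extremal cases. The only content worth emphasizing in the writeup is the symmetry of the derived relation $a(1-b)(1-c)=(1-a)bc$ in its last two arguments, since that symmetry is what turns a straightforward implication into an involution.
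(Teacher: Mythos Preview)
Your approach is exactly the paper's: derive the symmetric relation $a(1-b)(1-c)=(1-a)bc$ and read off the involution from its invariance under $b\leftrightarrow c$. One minor slip in your boundary discussion: for $0<a<1$ the denominator $a(1-b)+b(1-a)$ never vanishes (it equals $a$ at $b=0$ and $1-a$ at $b=1$), so the formula itself already yields $P(a,0)=1$ and $P(a,1)=0$, and your separate endpoint check, while correct, is not strictly needed.
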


The practical interpretation of this result is simple to state, even if it is not intuitively obvious:
if team $A$ has probability $c$ of beating a team with winning percentage $b$,
then team $A$ has probability $b$ of beating a team with winning percentage $c$.  The James conditions
already imply this relationship whenever $b$ and $c$ both belong to the set $\{0,1\}$ or the set $\{\frac{1}{2},a\}$.  Nevertheless, it is not
evident at this point whether the involutive property is a necessary consequence of the James conditions.  (Example
\ref{Ex1} will provide an answer to this question.)\bigskip

Proposition \ref{involution} has two further implications that are worth mentioning.  The first is a version of the involutive property
that holds for a fixed value of $b$:
\begin{quote}
If $0<b<1$, then $P(a,b)=1-c$ if and only if $P(c,b)=1-a$.
\end{quote}
The second is that the level curves for the James function (that is, the set of all points for which $P(a,b)=c$ for a particular constant $c$)
can be written
\begin{equation}\label{levelcurve}
b=P(a,c)=\frac{a(1-c)}{a(1-c)+c(1-a)}
\end{equation}
for $0<a<1$.  These level curves are the concrete manifestation of a straightforward principle: if a team $A$ improves by a certain amount,
 there should be a corresponding amount that a team $B$ can improve so that the probability of $A$ defeating
 $B$ remains unchanged.  Each level curve represents the path from $(0,0)$ to $(1,1)$ that such a pair would take in tandem.  (See Figure 1.)
\begin{figure}[h]
\scalebox{.8}{\includegraphics{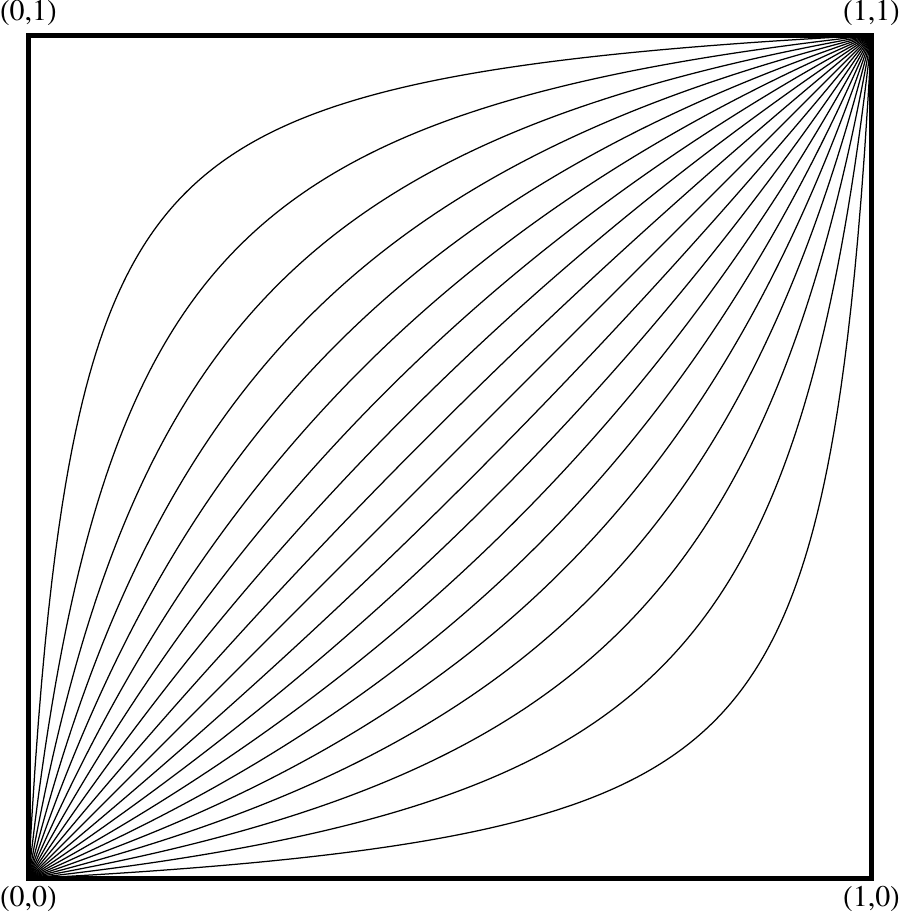}}
\caption{The level curves for the James function $P(a,b)$.}
\end{figure}

We conclude this section with one more observation relating to these level curves.

\begin{prop}\label{diffeq}
For any $0<c<1$, the corresponding level curve for the James function $P(a,b)$ is the unique solution to the differential equation
\[
\frac{db}{da}=\frac{b(1-b)}{a(1-a)}
\]
that passes through the point $(c,\frac{1}{2})$.
\end{prop}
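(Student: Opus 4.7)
The plan is to solve the differential equation directly by separation of variables and show that the resulting one-parameter family of solutions agrees with the explicit level curves already computed in (\ref{levelcurve}). Since the right-hand side of the differential equation depends on $a$ and $b$ as a product of a function of $a$ alone and a function of $b$ alone, this is the natural attack; uniqueness of the solution through a given point will follow from the standard Picard--Lindelöf theorem, once we note that the right-hand side $f(a,b)=b(1-b)/[a(1-a)]$ is continuous and locally Lipschitz in $b$ on the open square $0<a<1$, $0<b<1$.

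First I would separate variables, writing
\[
\frac{db}{b(1-b)}=\frac{da}{a(1-a)}\text{,}
\]
and integrate both sides using the partial fraction decomposition $1/[x(1-x)]=1/x+1/(1-x)$. This yields
\[
\ln\frac{b}{1-b}=\ln\frac{a}{1-a}+C
\]
on $0<a<1$, $0<b<1$, so that the general solution has the form $b/(1-b)=K\,a/(1-a)$ for some positive constant $K$. Imposing the initial condition $b(c)=\tfrac12$ forces $K=(1-c)/c$, and solving the resulting equation $b/(1-b)=a(1-c)/[(1-a)c]$ for $b$ produces exactly
\[
b=\frac{a(1-c)}{a(1-c)+c(1-a)}\text{,}
\]
which is formula (\ref{levelcurve}).

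For uniqueness, I would appeal to the observation that $f(a,b)=b(1-b)/[a(1-a)]$ is $C^{1}$ on the open unit square, hence locally Lipschitz in $b$, so Picard--Lindelöf guarantees that at most one solution passes through the interior point $(c,\tfrac12)$. The main subtlety is handling the endpoints $a=0$ and $a=1$, where $f$ blows up; however, the problem only asks about the solution through the interior point $(c,\tfrac12)$, and the explicit formula shows that the solution extends continuously to $(0,0)$ and $(1,1)$, so this does not affect uniqueness on $(0,1)$.

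The only step that requires a moment of thought is the uniqueness assertion, but once the ODE is recognized as separable with a smooth integrand on the interior of the unit square, this is essentially immediate from standard theory; the computation itself is routine.
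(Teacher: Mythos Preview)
Your argument is correct and matches the first of the two approaches the paper sketches: solving the separable ODE directly to recover (\ref{levelcurve}). The paper also notes an alternative via Proposition~\ref{gradient}, using that the gradient $\langle b(1-b),-a(1-a)\rangle$ is orthogonal to the level curve, but your separation-of-variables computation is exactly what the paper has in mind by ``verify this assertion directly.''
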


\noindent Another way of stating this result is that, for
two teams to maintain the same value of $P(a,b)$, they should increase (or decrease) their winning percentages according to the ratio $a(1-a):b(1-b)$.
One can either verify this assertion directly, by solving the differential equation to obtain (\ref{levelcurve}), or by
appealing to Proposition \ref{gradient} and recalling
that the gradient is always perpendicular to the level curve at a particular point.

\section{Jamesian functions}\label{S:isa}

We will now consider the question of whether there is a unique function satisfying the James
conditions.  We begin with the following observation, which is implicit in the construction of the James function.

\begin{prop}\label{notbrad}
The James function is the only function derived from the Bradley--Terry model that satisfies the James conditions.
\end{prop}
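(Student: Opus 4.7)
The plan is to show that the Bradley--Terry assumption plus James condition (a) pin down the worth function uniquely up to a multiplicative scaling, and that this scaling then cancels out of the quotient $w(a)/(w(a)+w(b))$, leaving exactly the James function.

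Concretely, suppose $P(a,b)=w(a)/(w(a)+w(b))$ for some worth function $w$ defined on (at least) $(0,1)$, and suppose $P$ satisfies the James conditions. Setting $b=\tfrac12$ and invoking condition (a) gives
\[
\frac{w(a)}{w(a)+w(\tfrac12)}=a,
\]
so, writing $c:=w(\tfrac12)$ and solving for $w(a)$, I get $w(a)=ca/(1-a)$ for every $0<a<1$. (Note that $c>0$, for $c=0$ would force $a=1$ for every $a$.) Substituting this back into the Bradley--Terry quotient, the constant $c$ cancels and the formula collapses to $a(1-b)/[a(1-b)+b(1-a)]$, which is the James function (\ref{jamesfunction}). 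Thus any Bradley--Terry function that obeys condition (a) must be $P(a,b)$ itself; the remaining James conditions (b)--(e) are then automatic, since they are already known to hold for the James function (and in fact serve as an internal consistency check: for instance, condition (d) requires $w(a)w(1-a)$ to be constant in $a$, which is satisfied because $w(a)w(1-a)=c^{2}$).

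The boundary values $a,b\in\{0,1\}$ are handled by a short aside. Condition (b) and its consequence $P(0,b)=0$ for $0<b\le 1$ are consistent with taking $w(0)=0$ and $w(1)=+\infty$ in the natural limiting sense, and these match $\lim_{a\to 0^{+}}ca/(1-a)=0$ and $\lim_{a\to 1^{-}}ca/(1-a)=+\infty$, so the extension to the boundary is forced and agrees with the James function on its entire domain. There is no real obstacle here; the whole proposition reduces to the one-line observation that, within the Bradley--Terry family, normalizing against a team of winning percentage $\tfrac12$ leaves no freedom whatsoever.
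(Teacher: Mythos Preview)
Your proof is correct and follows essentially the same approach as the paper's: use James condition~(a) with $b=\tfrac12$ to force $w(a)=ca/(1-a)$, substitute back into the Bradley--Terry quotient so that $c$ cancels, and then dispatch the boundary cases separately. The paper handles the boundary by simply noting that the James conditions already determine $P$ uniquely whenever $a$ or $b$ lies in $\{0,1\}$, whereas you phrase it via limits of $w$, but the argument is the same in substance.
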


\begin{proof}
Suppose $\pi(A,B)$ satisfies the James conditions and is derived from the Bradley--Terry model.  Let team $A$ have winning percentage $a$, with $0<a<1$, and let team $C$ have winning percentage $\frac{1}{2}$.
Condition (a) implies that
\[
a=\pi(A,C)=\frac{w(A)}{w(A) + w(C)}\text{.}
\]
Solving for $w(A)$, we obtain
\[
w(A)= \frac{aw(C)}{1-a}= q_{c}(a)\text{,}
\]
where $c = w(C)$.  Thus $\pi(A,B)$ agrees with the James function $P(a,b)$ when both $a$ and $b$
belong to the interval $(0,1)$.  Since the James conditions uniquely determine the value of a function whenever
$a$ or $b$ belongs to $\{0,1\}$, the functions $\pi(A,B)$ and $P(a,b)$ must be identical.
\end{proof}

Let $S$ denote the open unit square $(0,1)\times (0,1)$.  We will say that any function $J(a,b)$,
defined on the set $\overline{S}\setminus\{(0,0)\cup(1,1)\}$, that satisfies the James conditions
is a \textit{Jamesian function}.
Our immediate objective is to disprove the James conjecture
by identifying at least one example of a Jamesian function that is different from the James function $P(a,b)$.
Proposition \ref{notbrad} guarantees that any such function, if it exists, cannot be derived from the Bradley--Terry model.

\begin{ex}\label{Ex1}
We will reverse-engineer our first example of a new Jamesian function by starting with its level
curves.  Consider the family of curves $\{j_{c}\}_{c\in(0,1)}$ defined as follows:
\[
j_{c}(a)=\left\{
\begin{matrix}
\displaystyle\frac{a}{2c}, & 0<a\leq\displaystyle\frac{2c}{1+2c} \\
2ca+1-2c, & \displaystyle\frac{2c}{1+2c}<a<1
\end{matrix}
\right.
\]
for $0<c\leq\frac{1}{2}$ and
\[
j_{c}(a)=\left\{
\begin{matrix}
(2-2c)a, & \displaystyle 0<a\leq\frac{1}{3-2c}\\
\displaystyle\frac{a+1-2c}{2-2c}, & \displaystyle\frac{1}{3-2c}<a<1
\end{matrix}
\right.
\]
for $\frac{1}{2}<c<1$.  (See Figure 2.) These curves have been chosen to satisfy certain symmetry properties,
which the reader can probably deduce but which we will not state explicitly.
(Suffice it to say that $j_{c}(c)=\frac{1}{2}$ for all $c$.)
We define the function $J(a,b)$ on $S$ by assigning to every point $(a,b)$
the value of $c$ associated with the particular curve $j_{c}$ that passes through that point.
We assign the value $0$ or $1$ to points on the boundary of $S$, as dictated by the James conditions.

\begin{figure}[h]
\scalebox{.8}{\includegraphics{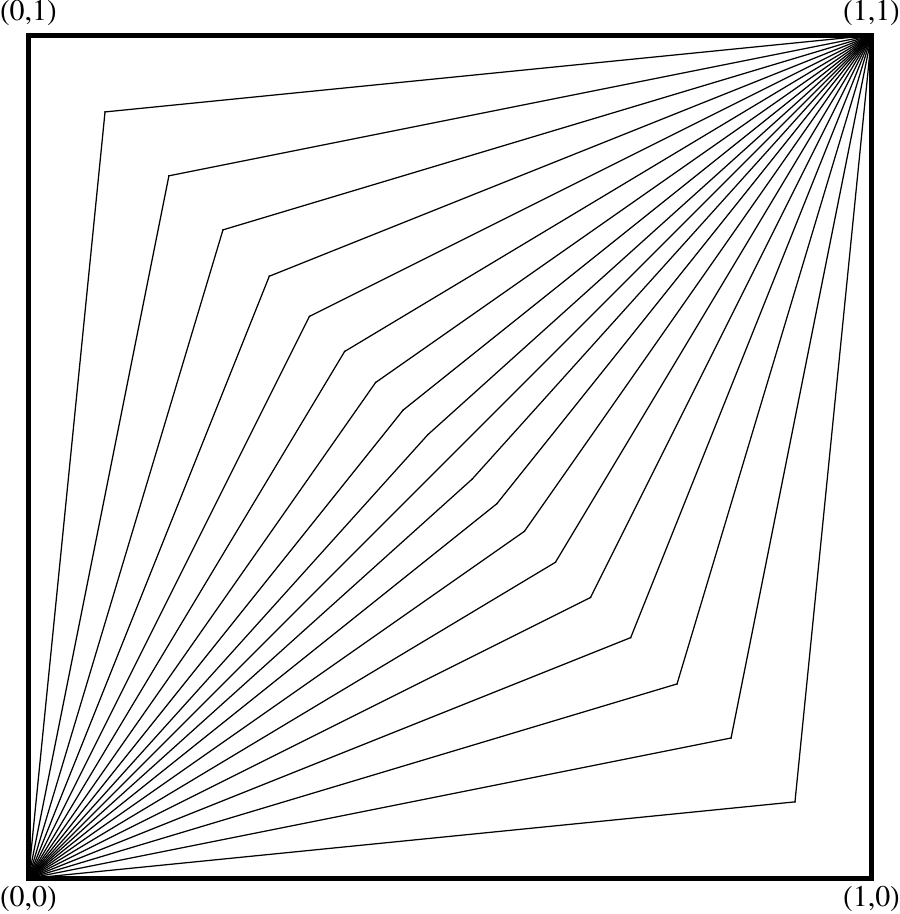}}
\caption{The level curves for the function $J(a,b)$ in Example \ref{Ex1}.}
\end{figure}

A bit more work yields an explicit
formula for $J(a,b)$, from which one can verify directly that all of the James conditions
are satisfied:
\[
J(a,b)=\left\{
\begin{matrix}
\displaystyle\frac{a}{2b}, & (a,b)\in\mathrm{I}\\ \\
\displaystyle\frac{2a-b}{2a}, & (a,b)\in\mathrm{II}\\ \\
\displaystyle\frac{1-b}{2(1-a)}, & (a,b)\in\mathrm{III}\\ \\
\displaystyle\frac{1+a-2b}{2(1-b)}, & (a,b)\in\mathrm{IV}\\
\end{matrix}
\right.\text{,}
\]
where I, II, III, and IV are subsets of $\overline{S}\setminus\{(0,0)\cup(1,1)\}$ that are defined according to Figure 3.

\begin{figure}[h]\label{quadrants}
\scalebox{.8}{\includegraphics{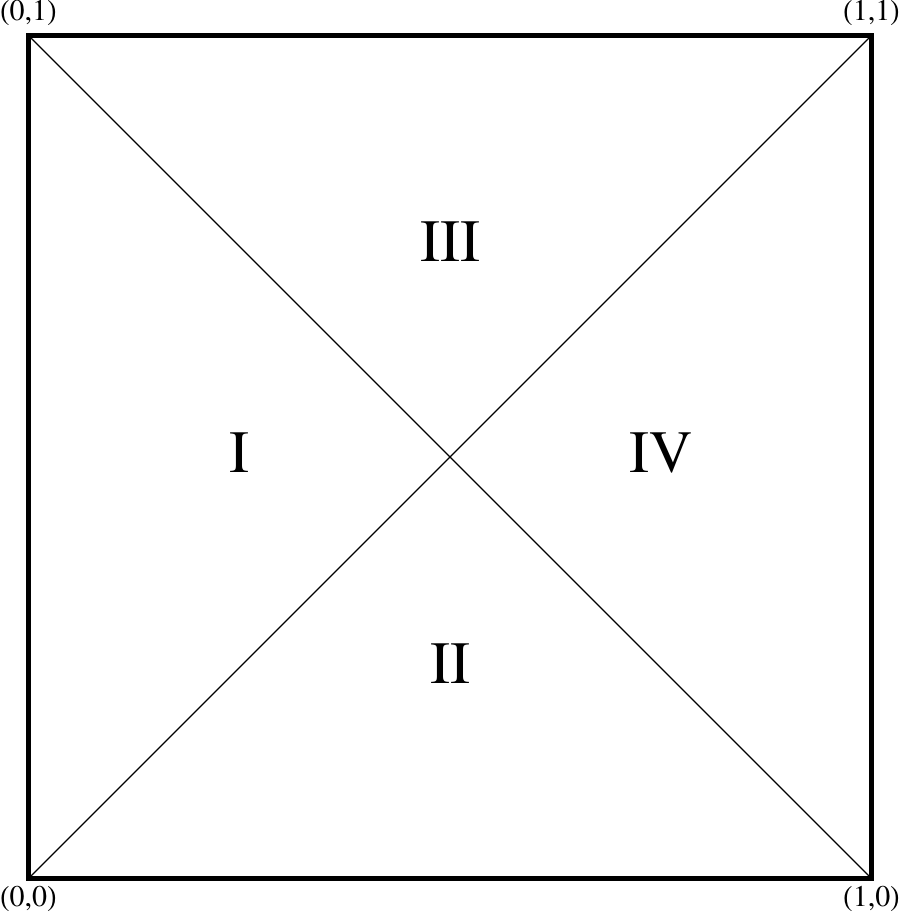}}
\caption{The subsets of $\overline{S}\setminus\{(0,0)\cup(1,1)\}$ in Example \ref{Ex1}.}
\end{figure}

Observe that the appropriate definitions coincide on the boundaries between regions, from which it follows that $J(a,b)$ is
continuous on $\overline{S}\setminus\{(0,0)\cup(1,1)\}$. On the other hand, it is not difficult to see that $J(a,b)$ fails to be differentiable
at all points of the form $(a,1-a)$ for $0<a<\frac{1}{2}$ or $\frac{1}{2}<a<1$.
(With some effort, one can show that it is differentiable at the point $(\frac{1}{2},\frac{1}{2})$.)
In reference to Proposition \ref{involution}, note that $J(\textstyle\frac{1}{3},\frac{1}{4})=\frac{5}{8}$ and
$J(\textstyle\frac{1}{3},\frac{5}{8})=\frac{4}{15}$.  In other words, the involutive property
is not a necessary consequence of the James conditions.
\end{ex}

In view of the preceding example, we need to refine our terminology somewhat.  We will refer to any Jamesian function (such as the James
function itself) that satisfies the condition
\[
J\bigl(a,J(a,b)\bigr)=b
\]
for $0<a<1$ as an \textit{involutive Jamesian function}.
\bigskip

It turns out to be fairly easy to construct Jamesian functions with discontinuities in $S$ (see \cite{supplement}).  Proposition \ref{invcont},
which we will prove in the next section, guarantees that any such function is not involutive.
Rather than considering such pathological examples, we will
devote the next section to examining Jamesian functions that are involutive, continuous, and (in many cases) differentiable.

\section{Involutive Jamesian functions}\label{S:hyp}

We now turn our attention to Jamesian functions that satisfy the involutive property
\[
J\bigl(a,J(a,b)\bigr)=b\text{,}
\]
or equivalently
\[
J(a,b)=c\text{ if and only if }J(a,c)=b\text{,}
\]
whenever $0<a<1$.  This property essentially subsumes three of the five James conditions (namely (a), (b), and (d)).

\begin{prop}
A function $J\colon\overline{S}\setminus\{(0,0)\cup(1,1)\}\rightarrow[0,1]$ is an involutive Jamesian function if and only if it satisfies the involutive property,
James condition (c), and James condition (e).
\end{prop}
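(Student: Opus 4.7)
The converse direction of the biconditional requires argument, since every involutive Jamesian function satisfies (c), (e), and the involutive property by definition. Suppose then that $J$ satisfies involutivity, (c), and (e); it remains to derive (a), (b), and (d). I would begin with (a): putting $b = a$ in (c) yields $J(a, a) = \frac{1}{2}$, and applying involutivity to $y = a$ gives $J(a, \frac{1}{2}) = J\bigl(a, J(a, a)\bigr) = a$ for every $a \in (0, 1)$. The boundary values $J(0, \frac{1}{2}) = 0$ and $J(1, \frac{1}{2}) = 1$ then follow from the strict monotonicity of $a \mapsto J(a, \frac{1}{2})$ supplied by (e), which sandwiches $J(0, \frac{1}{2}) \le a$ for all $a > 0$ and $J(1, \frac{1}{2}) \ge a$ for all $a < 1$.

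Next I would establish (b), which is the core step. Fix $a \in (0, 1)$ and set $f_a(b) := J(a, b)$. Involutivity makes $f_a$ a bijection of $[0, 1]$, and combining (c) with (e) via $J(a, b) = 1 - J(b, a)$ shows that $f_a$ is strictly decreasing on all of $[0, 1]$. A strictly decreasing bijection of $[0, 1]$ is forced to interchange the endpoints: if $f_a(0) = c < 1$, then any $b \in (c, 1]$ would satisfy $f_a(b) < f_a(c) = f_a\bigl(f_a(0)\bigr) = 0$, contradicting the fact that $f_a$ takes values in $[0, 1]$. Therefore $J(a, 0) = 1$ for every $a \in (0, 1)$; the case $a = 1$ then follows from monotonicity in (e) at $b = 0$, which gives $J(1, 0) \ge J(a, 0) = 1$ for each $a \in (0, 1)$. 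This boundary analysis is the main obstacle of the argument, because without continuity one must pin down the endpoint values of $f_a$ using only strict monotonicity and the bijective character of an involution.

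Condition (d) is then a formal consequence of (c) and involutivity. For $a, b \in (0, 1)$, the preceding step shows that $c := J(a, b)$ lies in $(0, 1)$, since $f_a$ restricts to a bijection of the open interval. Alternating applications of involutivity and (c) then produce the chain $J(a, c) = b$, $J(c, a) = 1 - b$, $J(c, 1 - b) = a$, $J(1 - b, c) = 1 - a$, and finally $J(1 - b, 1 - a) = c = J(a, b)$, with each use of involutivity licensed because $a$, $c$, and $1 - b$ all lie in $(0, 1)$. The remaining cases, in which $a$ or $b$ belongs to $\{0, 1\}$, are verified by direct substitution using (b) and (c), completing the derivation of all five James conditions.
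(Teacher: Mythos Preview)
Your argument is correct and follows essentially the same strategy as the paper's proof: derive $J(a,a)=\tfrac12$ from (c) and then (a) via involutivity; obtain (b) by a contradiction that pits the involutive identity against strict monotonicity; and deduce (d) from the same five-step chain alternating (c) and involutivity. The only notable differences are cosmetic: you prove (a) before (b) and handle its boundary values via monotonicity, whereas the paper establishes (b) and (d) first and then reads off $J(1,\tfrac12)=J(\tfrac12,0)=1$ from them; and your contradiction for (b) uses the strict decrease of $b\mapsto J(a,b)$ (pushing values below $0$), while the paper uses the strict increase of $c\mapsto J(c,a)$ (pushing values above $1$)---dual versions of the same argument via (c).
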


\begin{proof}  By definition, an involutive Jamesian function must satisfy the involutive property, as well as all five James conditions.  Suppose then that $J(a,b)$
satisfies the involutive property, together with conditions (c) and (e).

To see that $J(a,b)$ satisfies condition (b), take $0<a<1$ and suppose that $J(a,0)=c$ for $0\leq c<1$.  The involutive property would then dictate that $J(a,c)=0$, and thus condition (c) would imply that $J(c,a)=1$.  Hence $J(c^{\prime},a)\leq J(c,a)$ for $c<c^{\prime}\leq 1$, which would violate condition (e).  Consequently $J(a,0)=1$ for $0<a<1$.
Since $J(a,b)$ is a non-decreasing function of $a$, we conclude that $J(1,0)=1$ as well.

Next consider condition (d).  Applying the involutive property three times and condition (c) twice, we see that
\begin{align*}
J(a,b)=c\hspace{.1in}
\Longleftrightarrow\hspace{.1in}&J(a,c)=b\\
\Longleftrightarrow\hspace{.1in}&J(c,a)=1-b\\
\Longleftrightarrow\hspace{.1in}&J(c,1-b)=a\\
\Longleftrightarrow\hspace{.1in}&J(1-b,c)=1-a\\
\Longleftrightarrow\hspace{.1in}&J(1-b,1-a)=c\text{,}
\end{align*}
as long as $a$, $b$, and $c$ all belong to the interval $(0,1)$.  The cases where $a$, $b$, or $c$ belongs to $\{0,1\}$ can be dealt with
by appealing to condition (b).  In particular, we know that $J(a,0)=1$ for $0<a\leq 1$, which implies that $J(1-a,0)=1$ for $0\leq a<1$.
The involutive property dictates that $J(1-a,1)=0$
for $0<a<1$.  Since $J(1,0)=1$, it follows from (c) that $J(1,1-a)=1=J(a,0)$ for $0<a\leq1$.   Hence
condition (d) holds whenever $b=0$.  The remaining cases can be deduced from this observation.

Finally, consider condition (a).  Taking $b=a$ in condition (c), we see that $J(a,a)=\frac{1}{2}$.  Hence the involutive property dictates that $J(a,\frac{1}{2})=a$ for $0<a<1$.
For $a=1$, simply note that conditions (d) and (b) imply that $J(1,\frac{1}{2})=J(\frac{1}{2},0)=~1$.  Similarly, condition (c) shows that $J(0,\frac{1}{2})=1-J(\frac{1}{2},0)=0$.
\end{proof}

\noindent In other words, to identify an involutive Jamesian function, we can restrict our attention to the following set of conditions:
\begin{enumerate}[label=(\roman*)]
\item $J\bigl(a,J(a,b)\bigr)=b$ for $0<a<1$.
\item $J(b,a)=1-J(a,b)$.
\item $J(a,b)$ is a non-decreasing function of $a$ for $0\leq b\leq 1$ and a strictly increasing function of $a$ for $0<b<1$.
\end{enumerate}
We will refer to this list as the \textit{involutive James conditions}.

Condition (i) also guarantees that a Jamesian function possesses another important property.

\begin{prop}\label{invcont}
Every involutive Jamesian function is continuous on $\overline{S}\setminus\{(0,0)\cup(1,1)\}$.
\end{prop}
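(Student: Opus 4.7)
My plan is to bootstrap separate continuity in each variable into joint continuity, with the involutive property doing the essential work.

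\textbf{Step 1 (separate continuity in $b$).} Fix $a\in(0,1)$ and set $f(b)=J(a,b)$. From (ii), $f(b)=1-J(b,a)$; since $J(\cdot,a)$ is strictly increasing by (iii), $f$ is strictly decreasing on $[0,1]$. From (i), $f(f(b))=b$ for $b\in(0,1)$. If $f(b)=0$ then $f(f(b))=f(0)=J(a,0)=1$ by the boundary formulas established in the preceding proposition, which forces $b=1$; similarly $f(b)=1$ forces $b=0$. Hence $f$ restricts to a strictly decreasing bijection of $(0,1)$ onto itself. A strictly monotone function between intervals whose image is an interval must be continuous, for a jump would leave a nontrivial subinterval of the image unattained. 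So $f$ is continuous on $(0,1)$, and, as its image is exactly $(0,1)$, its one-sided limits at $0$ and $1$ are $1$ and $0$, in agreement with the boundary values $f(0)=1$ and $f(1)=0$. Thus $J(a,\cdot)$ is continuous on all of $[0,1]$.

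\textbf{Step 2 (separate continuity in $a$).} For $b\in(0,1)$, condition (ii) gives $J(a,b)=1-J(b,a)$, which is continuous in $a$ by Step 1 applied with $b$ now in the role of the fixed first argument. For $b\in\{0,1\}$, $J(\cdot,b)$ is constant on its domain.

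\textbf{Step 3 (joint continuity via a monotonicity sandwich).} At $(a_0,b_0)\in S$, monotonicity (increasing in $a$, decreasing in $b$) yields
\begin{equation*}
J(a_-,b_+)\le J(a,b)\le J(a_+,b_-)
\end{equation*}
whenever $a_-\le a\le a_+$ and $b_-\le b\le b_+$. Given $\epsilon>0$, I first pick $b_\pm$ near $b_0$ so that $|J(a_0,b_\pm)-J(a_0,b_0)|<\epsilon/2$ using Step 1, and then $a_\pm$ near $a_0$ so that $|J(a_\pm,b_\mp)-J(a_0,b_\mp)|<\epsilon/2$ using Step 2; the sandwich then confines $J(a,b)$ within $\epsilon$ of $J(a_0,b_0)$ on the resulting rectangle. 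For a boundary point such as $(a_0,0)$ with $a_0>0$, the same idea applies with a one-sided version: fix $a_-\in(0,a_0)$, use monotonicity to get $J(a,b)\ge J(a_-,b_+)$ for $a\ge a_-$ and $b\le b_+$, and let $b_+\to 0^+$ so that $J(a_-,b_+)\to J(a_-,0)=1$. The other three boundary sides and the extreme corners $(0,1)$ and $(1,0)$ are handled analogously.

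The conceptual crux is Step 1: a strictly monotone self-inverse map on an interval is automatically continuous. Once that is in hand, the remainder is a routine monotonicity sandwich, and the only mild obstacle I anticipate is careful bookkeeping at the boundary of $\overline{S}$.
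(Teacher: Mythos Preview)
Your argument is correct, and its core insight coincides with the paper's: the involutive property forces each one-variable section to be a strictly monotone bijection of $(0,1)$ onto itself, hence continuous. The paper phrases this for the sections $a\mapsto J(a,c)$ (viewed as level curves), while you phrase it for $b\mapsto J(a,b)$; condition~(ii) makes these interchangeable.

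Where you diverge is in the passage from separate to joint continuity. The paper argues geometrically by contradiction: it brackets $(a_0,b_0)$ between two continuous level curves $b=J(a,c_1)$ and $b=J(a,c_2)$, and shows that a putative discontinuity would produce a third level curve forced to cross one of them, which is impossible since distinct level curves are disjoint. You instead run a direct monotonicity sandwich, using that $J$ is increasing in $a$ and decreasing in $b$ to trap $J(a,b)$ between $J(a_-,b_+)$ and $J(a_+,b_-)$, then invoking separate continuity to shrink the gap. Your route is shorter and more elementary, avoiding the $\varepsilon$--$\delta$ geometry and the figure; the paper's route has the advantage of making the level-curve structure visually explicit, which it exploits later in the section. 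Both handle the boundary with the same one-sided adaptation.
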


\begin{proof}
Take a fixed value $0<c<1$ and consider the level curve  $J(a,b)=c$, which can be
rewritten $b=J(a,c)$ for $0<a<1$.  Conditions (i) and (ii) imply that
\[
J\bigl(1-J(a,c),c\bigr)=1-a\text{.}
\]
Thus $J(a,c)$, viewed as a function of $a$, is a bijection from the interval $(0,1)$ onto itself.
Hence it follows from (iii) that the curve $J(a,c)$ is a continuous, strictly increasing function of $a$ that connects the points $(0,0)$ and $(1,1)$.

Suppose, for the sake of contradiction, that $J(a,b)$ fails to be continuous at a point $(a_{0},b_{0})$ in $S$.
In other words, there exists a positive number $\varepsilon_{0}$ such that, for any positive $\delta$, there is a point $(a,b)$ such that $\|(a,b)-(a_{0},b_{0})\|<\delta$
and $|J(a,b)-J(a_{0},b_{0})|\geq\varepsilon_{0}$.  (If necessary, redefine $\varepsilon_{0}$ so it is less than
$\min\{2J(a_{0},b_{0}),2-2J(a_{0},b_{0})\}$.)
Let $c_{1}=J(a_{0},b_{0})-\varepsilon_{0}/2$
and $c_{2}=J(a_{0},b_{0})+\varepsilon_{0}/2$, and consider the level curves $J(a,c_{1})$ and $J(a,c_{2})$.  Let $\delta_{0}$
denote the minimum of the distance between $(a_{0},b_{0})$ and $J(a,c_{1})$ and the distance between $(a_{0},b_{0})$ and
$J(a,c_{2})$.

By assumption, there is a point $(a_{3},b_{3})$ such that $\|(a_{3},b_{3})-(a_{0},b_{0})\|<\delta_{0}$
and $c_{3}=J(a_{3},b_{3})$ is either less than or equal to  $J(a_{0},b_{0})-\varepsilon_{0}$ or greater than or equal to
 $J(a_{0},b_{0})+\varepsilon_{0}$.  Since $J(a,c_{i})=\frac{1}{2}$ at $a=c_{i}$, the level curve $J(a,c_{3})$ intersects the line $b=\frac{1}{2}$ either to the left of
 the curve $J(a,c_{1})$ or to the right of the curve $J(a,c_{2})$.  On the other hand, since $(a_{3},b_{3})$ lies within $\delta_{0}$ of $(a_{0},b_{0})$,
the curve $J(a,c_{3})$ must intersect the line $b=b_{3}$ between $J(a,c_{1})$ and $J(a,c_{2})$.  Hence two of the level curves must intersect at a point in $S$, which is impossible.  (See Figure 4 for a graphical illustration of this argument.)

Now consider a point $(a_{0},b_{0})$ on the boundary of $S$.  The only difference in the proof is that, if $a=0$ or $b=1$, the level curve $J(a,c_{1})$ does not exist.  In this case, it is not difficult to see that $J(a,c_{3})$ must intersect the curve $J(a,c_{2})$.  Similarly, if $a=1$ or $b=0$,
there is no level curve $J(a,c_{2})$, but one can show that $J(a,c_{3})$ must intersect $J(a,c_{1})$.
\end{proof}

\begin{figure}[h]\label{contfigure}
\scalebox{.8}{\includegraphics{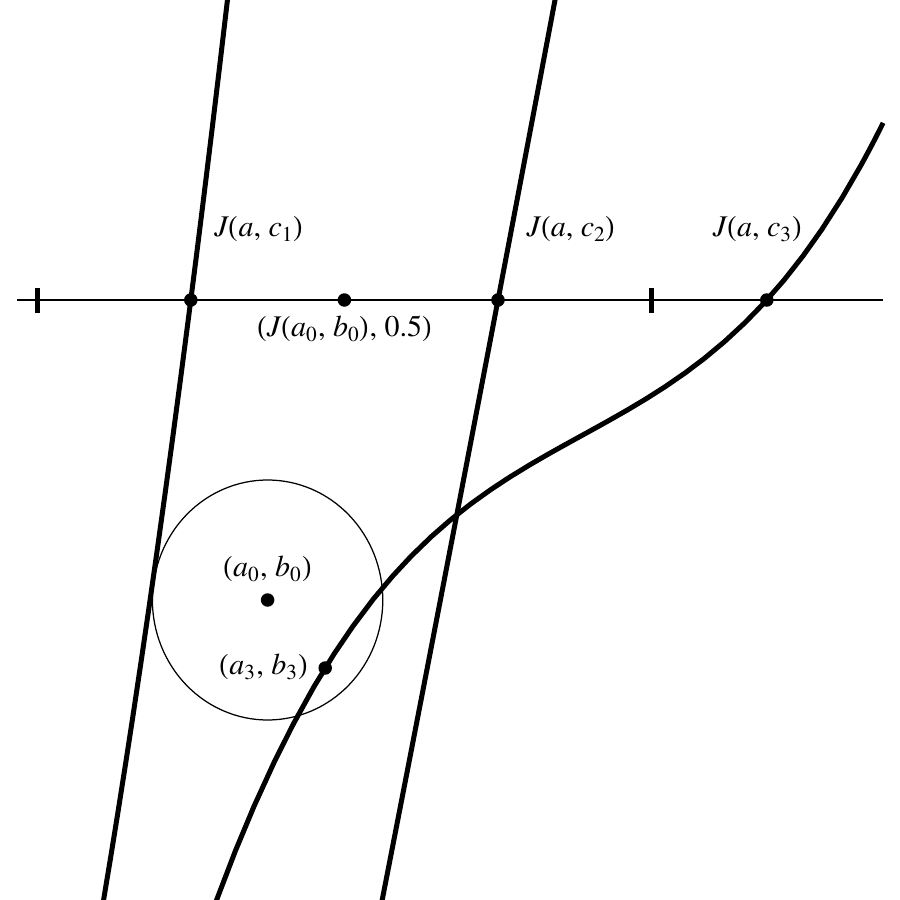}}
\caption{An illustration of the proof of Proposition \ref{invcont}.}
\end{figure}
\bigskip

Let $g\colon (0,1)\rightarrow\mathbb{R}$ be a continuous, strictly increasing function that satisfies the conditions
\begin{itemize}
\item $g(1-a)=-g(a)$.
\item $\displaystyle\lim_{a\rightarrow 0^{+}}g(a)=-\infty$.
\end{itemize}
These conditions imply that $g(\frac{1}{2})=0$ and that 
\[
\lim_{a\rightarrow 1^{-}}g(a)=\infty\text{.}
\]
Observe that $g^{-1}\colon\mathbb{R}\rightarrow(0,1)$ is
a continuous, strictly increasing function with $g^{-1}(-s)=1-g^{-1}(s)$.  It makes sense to define $g(0)=-\infty$
and $g(1)=\infty$, so that $g^{-1}(-\infty)=0$ and $g^{-1}(\infty)=1$.
We claim that any such function $g$ can be used to construct an involutive Jamesian function.

\begin{thm}\label{jacthm}
For any $g$ satisfying the conditions specified above, the function
\begin{equation}\label{ginvg}
J(a,b)=g^{-1}\bigl(g(a)-g(b)\bigr)
\end{equation}
is an involutive Jamesian function.
\end{thm}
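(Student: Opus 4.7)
The plan is to invoke the preceding proposition, which reduces the verification to only the three involutive James conditions (i), (ii), and (iii) for the function $J(a,b) = g^{-1}\bigl(g(a) - g(b)\bigr)$. Before checking them, I first record that with the boundary conventions $g(0) = -\infty$, $g(1) = \infty$, $g^{-1}(-\infty) = 0$, and $g^{-1}(\infty) = 1$, the difference $g(a) - g(b)$ lies in the extended real line $[-\infty, \infty]$ for every point $(a,b) \in \overline{S} \setminus \{(0,0), (1,1)\}$. The only way to produce an indeterminate form $\infty - \infty$ would require $a = b = 0$ or $a = b = 1$, and those are precisely the excluded points; so $J$ is well-defined on the correct domain.

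For condition (i), I fix $0 < a < 1$, so that $g(a)$ is a finite real number. A direct substitution, using that $g$ and $g^{-1}$ are mutually inverse (with the extended-real conventions above), yields
\[
J\bigl(a, J(a,b)\bigr) = g^{-1}\bigl(g(a) - (g(a) - g(b))\bigr) = g^{-1}\bigl(g(b)\bigr) = b.
\]
For condition (ii), I apply the hypothesis $g^{-1}(-s) = 1 - g^{-1}(s)$ (which follows from the assumption $g(1-a) = -g(a)$) to write
\[
J(b,a) = g^{-1}\bigl(g(b) - g(a)\bigr) = 1 - g^{-1}\bigl(g(a) - g(b)\bigr) = 1 - J(a,b).
\]

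For condition (iii), I use the strict monotonicity of $g$ and hence of $g^{-1}$. When $0 < b < 1$, $g(b)$ is finite, so the map $a \mapsto g(a) - g(b)$ is strictly increasing as $a$ ranges over $[0,1]$ (allowing the values $\pm\infty$ at the endpoints), and composing with the strictly increasing $g^{-1}$ shows that $J(a,b)$ is strictly increasing in $a$. When $b = 0$, we have $g(a) - g(0) = +\infty$ for all $a > 0$, so $J(a,0) \equiv 1$ is constant and thus non-decreasing; the case $b = 1$ is handled symmetrically. The proof involves no deep step; each condition reduces to a one-line computation. The only real subtlety, and the closest thing to an obstacle, is the bookkeeping at the boundary: one must confirm that the extended-real conventions for $g$ and $g^{-1}$ are internally consistent and that they deliver exactly the boundary values for $J$ required by the James conditions, but once this is set up no further input is needed.
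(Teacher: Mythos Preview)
Your proof is correct and follows essentially the same approach as the paper: you verify the three involutive James conditions (i), (ii), and (iii) directly, with the same one-line computations for each. Your treatment is slightly more explicit than the paper's about the extended-real boundary conventions and the well-definedness of $J$ on $\overline{S}\setminus\{(0,0),(1,1)\}$, but the structure and substance of the argument are identical.
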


\begin{proof}
Consider each of the three involutive James conditions:

(i) Note that
 \begin{align*}
J\bigl(a,J(a,b)\bigr)&=g^{-1}\bigl(g(a)-g\bigl(g^{-1}\bigl(g(a)-g(b)\bigr)\bigr)\bigr)\\
&=g^{-1}\bigl(g(a)-g(a)+g(b)\bigr)\\
&=g^{-1}\bigl(g(b)\bigr)=b\text{,}
\end{align*}
as long as $0<a<1$.  (The cases where $a=0$ and $a=1$ yield the indeterminate forms $-\infty+\infty$ and $\infty-\infty$.)

(ii) Similarly,
\[
J(b,a)=g^{-1}\bigl(g(b)-g(a)\bigr)=1-g^{-1}\bigl(g(a)-g(b)\bigr)=1-J(a,b)\text{.}
\]

(iii) Since both $g$ and $g^{-1}$ are strictly increasing, it follows that $J(a,b)$ is a strictly increasing function of $a$ when $0<b<1$.
Moreover, $J(a,b)$ takes on the constant value $1$ when $b=0$ and the constant value $0$ when $b=1$.
\end{proof}

\noindent While it is unnecessary to verify James conditions (a) and (d), it is worth noting that (a) corresponds to the property $g(\frac{1}{2})=0$ and (d) to the property
$g(1-a)=-g(a)$.  In effect, we verified condition (b) in the process of considering (iii).

It is easy to use Theorem \ref{jacthm} to generate concrete examples.

\begin{ex}\label{Ex2}
The function
\[
g(a)=\frac{2a-1}{a(1-a)}
\]
satisfies all the necessary conditions for Theorem \ref{jacthm}, so (\ref{ginvg}) defines an
involutive Jamesian function.  Since
\[
g^{-1}(s)=\frac{s-2+\sqrt{s^{2}+4}}{2s}\text{,}
\]
we obtain
\[
J(a,b)=\frac{x+y-\sqrt{x^2+y^2}}{2y}=\frac{x}{x+y+\sqrt{x^2+y^2}}\text{,}
\]
where $x=2ab(1-a)(1-b)$ and $y=(b-a)(2ab-a-b+1)$.
\end{ex}

\begin{ex}\label{Ex3}
The function $g(a)=-\cot(\pi a)$ yields the involutive Jamesian function
\[
J(a,b)=\frac{1}{\pi}\cot^{-1}\bigl(\cot(\pi a)-\cot(\pi b)\bigr)\text{,}
\]
where we are using the version of the inverse cotangent  that attains values between
$0$ and $\pi$.
\end{ex}
\bigskip

The construction described in Theorem \ref{jacthm} is closely related to what is known as a \textit{linear
model} for paired comparisons.  In such a model,
\[
\pi(A,B)=F\bigl(v(A)-v(B)\bigr)\text{,}
\]
where $v$ denotes a measure of worth and $F$ is the cumulative distribution function of a random
variable that is symmetrically distributed about $0$ (see \cite[Section 1.3]{david}).  The Bradley--Terry model can be viewed
as a linear model, where $F$ is the logistic function
\[
F(s)=\frac{e^{s}}{e^{s}+1}=\int_{-\infty}^{s}\frac{e^{t}}{(1+e^{t})^{2}}dt
\]
and $v(A)=\log w(A)$.  In particular, the James function can be constructed in the manner of Theorem \ref{jacthm}, with
$F=g^{-1}$ being the logistic function and $g$ being the so-called logit function
\[
g(a)=\log\!\left(\frac{a}{1-a}\right)\text{.}
\]
(This observation could charitably be construed as an \textit{a posteriori} justification for the term ``log5" originally
used by James.)

What is distinctive about the James function in this context is that the construction
is symmetric, with $v(A)=\log w(A)$ and $v(B)=\log w(B)$ replaced by $g(a)=\log(a/(1-a))$ and $g(b)=\log(b/(1-b))$ respectively.
This symmetry corresponds to the twofold application of the Bradley--Terry model that was discussed in Section \ref{S:intro}.
Likewise, the fact that both $g$ and $g^{-1}$ appear in the general formulation of Theorem \ref{jacthm} can be interpreted as a consequence of the same
model being used to define both worth and probability.

\bigskip

\begin{ex}\label{Ex4}
Take
\[
F(s)=g^{-1}(s)=\frac{1}{\sqrt{2\pi}}\int_{-\infty}^{s}e^{-\frac{t^{2}}{2}}dt\text{,}
\]
so that $g$ is the so-called probit function.  The involutive Jamesian function $J(a,b)=g^{-1}\bigl(g(a)-g(b)\bigr)$ can be considered
the analogue of the James function relative to the Thurstone--Mosteller model (see \cite{david}).
\end{ex}

\bigskip

Theorem \ref{jacthm} allows us to identify a large class of functions that can be viewed as generalizations of the James function.  Since
\[
\log\!\left(\frac{a}{1-a}\right)=\int_{\frac{1}{2}}^{a}\left(\frac{1}{t}+\frac{1}{1-t}\right)dt=\int_{\frac{1}{2}}^{a}\frac{1}{t(1-t)}dt\text{,}
\]
we define
\[
g_{n}(a)=\int_{\frac{1}{2}}^{a}\frac{1}{(t(1-t))^{n}}dt
\]
for any real number $n\geq 1$.  It is not difficult to verify that $g_{n}$ satisfies all of the prescribed requirements for Theorem \ref{jacthm}.  (The stipulation that $g_{n}(0)=-\infty$ precludes the case where $0<n<1$.)
Define
\begin{equation}\label{hypjameq}
H_{n}(a,b)=g_{n}^{-1}\bigl(g_{n}(a)-g_{n}(b)\bigr)\text{.}
\end{equation}
For $n>1$, we shall refer to $H_{n}(a,b)$ as a \textit{hyper-James function}.  Each of these functions
is an involutive Jamesian function.

In some situations, it is possible to obtain a more concrete representation for
$H_{n}(a,b)$.  For example, one can show that
\[
g_{\frac{3}{2}}(a)=\frac{2(2a-1)}{\sqrt{a(1-a)}}
\]
and
\[
g_{\frac{3}{2}}^{-1}(s)=\frac{s+\sqrt{s^{2}+16}}{2\sqrt{s^{2}+16}}\text{,}
\]
and hence
\[
H_{\frac32}(a,b)=\frac12+\frac{v^{\prime}\sqrt u-u^{\prime}\sqrt v}{2\sqrt{u+v-4uv-2u^{\prime}v^{\prime}\sqrt{uv}}}
\]
for $u=a(1-a)$, $v=b(1-b)$, $u^{\prime}=1-2a$, and $v^{\prime}=1-2b$ (see \cite{supplement} for more details).  In general, though, it seems unlikely that there is an explicit formula for $H_{n}(a,b)$ that is more
useful than (\ref{hypjameq}).

\bigskip

We will now examine the issue of differentiability.
For any function defined according to Theorem \ref{jacthm}, a routine calculation shows that
\begin{equation}\label{djda}
\frac{\partial J}{\partial a}=\frac{g^{\prime}(a)}{g^{\prime}\bigl(J(a,b)\bigl)}
\end{equation}
and
\begin{equation}\label{djdb}
\frac{\partial J}{\partial b}=\frac{-g^{\prime}(b)}{g^{\prime}\bigl(J(a,b)\bigl)}
\end{equation}
at all points $(a,b)$ for which the above quotients are defined.  Based on this observation, we are able to
obtain the following result.

\begin{prop}\label{diffprop}
If $g$ is continuously differentiable on $(0,1)$, with $g^{\prime}$ never equal to $0$, the corresponding Jamesian function $J(a,b)$ is differentiable on $S$.
Conversely,  if $J(a,b)$ is differentiable on $S$, the function $g$ must be differentiable on $(0,1)$ with $g^{\prime}$ never $0$.
\end{prop}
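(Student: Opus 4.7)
For the forward implication, the plan is a routine application of the inverse function theorem: since $g$ is $C^{1}$ on $(0,1)$ with $g'$ nowhere vanishing, $g^{-1}\colon\mathbb{R}\to(0,1)$ is $C^{1}$, and consequently $J(a,b)=g^{-1}(g(a)-g(b))$ is differentiable on $S$ as a composition of differentiable functions; its partial derivatives, computed by the chain rule, are exactly (\ref{djda}) and (\ref{djdb}).

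For the converse, suppose $J$ is differentiable on $S$. I would first observe that, for each fixed $b_{0}\in(0,1)$, the map $\ell_{b_{0}}(a):=J(a,b_{0})$ is a differentiable, strictly increasing bijection of $(0,1)$, whose inverse, by condition (c) and the involutive property, is $\ell_{b_{0}}^{-1}(c)=J(c,1-b_{0})$. Since $\ell_{b_{0}}^{-1}$ is itself differentiable by hypothesis, the chain rule applied to $\ell_{b_{0}}\circ\ell_{b_{0}}^{-1}=\mathrm{id}$ forces $\partial_{a}J(a,b_{0})$ to be nonzero everywhere; the analogous argument for the involution $b\mapsto J(a,b)$ shows that $\partial_{b}J(a,b)$ is nonzero throughout $S$. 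Next, using the functional equation $g(J(a,b))=g(a)-g(b)$ and the fact that $J(a,a)=1/2$, I would rewrite
\[
\frac{g(a+h)-g(a)}{h}=\frac{g\bigl(J(a+h,a)\bigr)}{h}=\frac{g\bigl(1/2+\delta(h)\bigr)}{h},
\]
where $\delta(h):=J(a+h,a)-1/2$ satisfies $\delta(0)=0$ and $\delta'(0)=\partial_{a}J(a,a)\neq0$. Since $\delta(h)/h\to\partial_{a}J(a,a)$ as $h\to0$, this reduces the differentiability of $g$ at an arbitrary $a\in(0,1)$ to the differentiability of $g$ at $1/2$, with $g'(a)=g'(1/2)\cdot\partial_{a}J(a,a)$.

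The hard part will be establishing differentiability of $g$ at $1/2$ itself, where the previous substitution is a tautology. For this I would exploit the symmetry $g(1-a)=-g(a)$, which yields the doubling identity
\[
2g(1/2+h)=g\bigl(J(1/2+h,1/2-h)\bigr);
\]
the right-hand side is a differentiable function of $h$ near $0$, with derivative at $0$ equal to $\partial_{a}J(1/2,1/2)-\partial_{b}J(1/2,1/2)=2$. Combined with the nonvanishing of $\partial_{b}J(a,1/2)$ (which makes $-g'(1/2)/\partial_{b}J(a,1/2)$ the natural candidate for $g'(a)$), a bootstrap argument iterating the doubling relation should pin down the derivative at $1/2$. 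The main subtlety here is that ``differentiable'' is weaker than ``continuously differentiable,'' so one must take care to rule out pathological oscillations of the difference quotient for $g$ at $1/2$.

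Finally, once $g$ is known to be differentiable on $(0,1)$, the nonvanishing of $g'$ follows directly by differentiating $g(\ell_{b_{0}}(a))=g(a)-g(b_{0})$ to obtain $g'(\ell_{b_{0}}(a))\,\ell_{b_{0}}'(a)=g'(a)$. Since $\ell_{b_{0}}'$ is never zero and since $\ell_{b_{0}}(a_{1})$ can be made to equal any prescribed $a_{2}\in(0,1)$ by choosing $b_{0}=J(a_{1},a_{2})$, a single zero of $g'$ would propagate to a zero of $g'$ everywhere, contradicting the strict monotonicity of $g$.
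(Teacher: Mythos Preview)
Your forward direction is the same as the paper's. Your final paragraph, showing that a single zero of $g'$ would propagate to all of $(0,1)$, is also essentially the paper's argument for nonvanishing.

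The gap is in the converse, precisely where you flag it: establishing that $g$ is differentiable at $\tfrac{1}{2}$. Your doubling identity gives $2G(h)=G(\psi(h))$ for $G(h)=g(\tfrac{1}{2}+h)$ and $\psi(h)=J(\tfrac{1}{2}+h,\tfrac{1}{2}-h)-\tfrac{1}{2}$, with $\psi(0)=0$ and $\psi'(0)=2$. But the hypothesis is only that $J$ is \emph{differentiable}, not $C^{1}$, so $\psi$ need not be $C^{1}$ near $0$. Iterating $G(\eta^{n}(h))=2^{-n}G(h)$ with $\eta=\psi^{-1}$ yields $G(h_{n})/h_{n}=G(h_{0})/(2^{n}h_{n})$, and the product $2^{n}h_{n}$ is governed by $\sum\bigl(\eta(h_{k})/h_{k}-\tfrac{1}{2}\bigr)$, which you cannot control with only $\eta'(0)=\tfrac{1}{2}$. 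So the bootstrap does not obviously converge, and even if it did along one orbit, you would still need the limit to be independent of the orbit. I do not see how to close this without an extra regularity assumption on $J$.

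The paper sidesteps the problem entirely with a one-line trick you are missing: since $g$ is strictly increasing on $(0,1)$, Lebesgue's theorem guarantees it is differentiable almost everywhere, so there is \emph{some} $c\in(0,1)$ with $g'(c)$ defined. For any $a_{0}$, choose $b_{0}$ with $J(a_{0},b_{0})=c$; differentiating $g(a)=g\bigl(J(a,b_{0})\bigr)+g(b_{0})$ at $a_{0}$ gives $g'(a_{0})=g'(c)\cdot\partial_{a}J(a_{0},b_{0})$. This is exactly your propagation step, but launched from a point supplied by measure theory rather than from $\tfrac{1}{2}$, which removes the need for any bootstrap at all.
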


\begin{proof}
Suppose that $g^{\prime}$ is continuous and nonzero on $(0,1)$.
It follows from (\ref{djda}) and (\ref{djdb}) that
both $\frac{\partial J}{\partial a}$ and $\frac{\partial J}{\partial b}$ are defined and continuous at
all points in the open set $S$, which guarantees that $J(a,b)$ is differentiable on $S$.

Now suppose that $J(a,b)$ is differentiable at every point in $S$.  Let $a_{0}$ be an arbitrary element of $(0,1)$.
Since $g$ is strictly increasing, it could only fail to be differentiable on a set of measure $0$ (see \cite[p.\ 112]{royden}).
In particular, there is at least one  $c$ in $(0,1)$ for which $g^{\prime}(c)$ is defined.
Since $J(a_{0},b)$, viewed as a function of $b$, attains every value in the interval $(0,1)$,
there exists a $b_{0}$ in $(0,1)$ such that $J(a_{0},b_{0})=c$.
Note that
\[
g(a)=g\bigl(J(a,b_{0})\bigr)+g(b_{0})
\]
for all $a$ in $(0,1)$, so the chain rule dictates that
\[
g^{\prime}(a_{0})=g^{\prime}(c)\cdot\frac{\partial J}{\partial a}(a_{0},b_{0})\text{.}
\]
Therefore $g$ is differentiable on the entire interval $(0,1)$.  Suppose, for the sake of contradiction, that
there were some $d$ in $(0,1)$ for which $g^{\prime}(d)=0$.
As before,  there would exist a $b_{1}$ in $(0,1)$ such that $J(a_{0},b_{1})=d$, which would imply that
\[
g^{\prime}(a_{0})=g^{\prime}(d)\cdot\frac{\partial J}{\partial a}(a_{0},b_{1})=0\text{.}
\]
Consequently $g^{\prime}$ would be identically $0$ on $(0,1)$, which is impossible.
\end{proof}

In other words, all the specific examples of Jamesian functions we have introduced in this section,
including the hyper-James functions,
are differentiable on $S$.  We can now state a more general version of Proposition \ref{gradient},
which follows directly from (\ref{djda}) and (\ref{djdb}).

\begin{prop}\label{jacgrad}
For any differentiable Jamesian function $J(a,b)$ defined according to Theorem \ref{jacthm}, the gradient
at a point $(a,b)$ in $S$ is a positive multiple of the vector $\langle g^{\prime}(a),-g^{\prime}(b)\rangle$.
\end{prop}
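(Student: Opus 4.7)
The plan is to read off the result directly from formulas (\ref{djda}) and (\ref{djdb}), with the only substantive point being that the common scalar factor is positive. Since $J(a,b)$ is assumed differentiable, Proposition \ref{diffprop} tells us that $g$ is differentiable on $(0,1)$ with $g'$ never vanishing, so the right-hand sides of (\ref{djda}) and (\ref{djdb}) are well defined at every $(a,b) \in S$.

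First I would simply assemble the gradient from the two partial derivatives:
\[
\nabla J(a,b) = \left\langle \frac{\partial J}{\partial a},\, \frac{\partial J}{\partial b} \right\rangle = \frac{1}{g'\bigl(J(a,b)\bigr)}\,\bigl\langle g'(a),\, -g'(b) \bigr\rangle.
\]
Then I would argue that the scalar $1/g'(J(a,b))$ is strictly positive. Because $g\colon(0,1)\to\mathbb{R}$ is strictly increasing and differentiable with $g'$ nowhere zero, $g'$ must be positive on all of $(0,1)$ (a strictly increasing differentiable function has nonnegative derivative, and the hypothesis excludes the value $0$). For $(a,b) \in S$ we have $J(a,b) \in (0,1)$, so $g'(J(a,b)) > 0$ and the scalar factor is indeed positive.

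There is essentially no obstacle here: the real work has already been absorbed into Proposition \ref{diffprop} and the derivation of (\ref{djda}) and (\ref{djdb}). The only point that requires a sentence of justification, rather than pure manipulation, is the positivity of $g'$ on $(0,1)$, which follows from monotonicity together with the non-vanishing hypothesis guaranteed by differentiability of $J$.
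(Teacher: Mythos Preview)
Your proposal is correct and follows the same approach as the paper, which simply remarks that the result ``follows directly from (\ref{djda}) and (\ref{djdb}).'' You are more explicit than the paper in justifying the positivity of the scalar $1/g'(J(a,b))$, invoking Proposition~\ref{diffprop} and the monotonicity of $g$; this is a welcome bit of extra care but not a different method.
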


If $g$ is differentiable on $(0,1)$, the condition that $g(1-a)=-g(a)$ implies that $g^{\prime}(1-a)=g^{\prime}(a)$.  Hence the gradient of
$J(a,b)$ is a positive multiple
of $\langle 1,-1\rangle$ whenever $b=a$ or $b=1-a$.  This observation generalizes the fact that,
whenever two teams have identical or complementary winning percentages,
the optimal strategy for increasing $P(a,b)$ is to increase $a$ and decrease $b$ by equal amounts.

\bigskip

For any Jamesian function given by (\ref{ginvg}), the level curve $J(a,b)=c$ for $0<c<1$ can be rewritten
\[
b=J(a,c)=g^{-1}\bigl(g(a)-g(c)\bigr)\text{,}
\]
or $g(a)=g(b)+g(c)$.  Hence we have the following generalization of Proposition \ref{diffeq}.

\begin{prop}
Let $J(a,b)$ be a differentiable Jamesian function defined according to Theorem \ref{jacthm}.  For any $0<c<1$,
the corresponding level curve for $J(a,b)$ is the unique solution to the differential equation
\[
\frac{db}{da}=\frac{g^{\prime}(a)}{g^{\prime}(b)}
\]
that passes through the point $(c,\frac{1}{2})$.
\end{prop}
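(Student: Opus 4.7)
The plan is to derive the differential equation by implicit differentiation of the equation defining the level curve, verify the initial condition, and then prove uniqueness via separation of variables. Throughout, I may assume $g$ is differentiable on $(0,1)$ with $g^{\prime}$ never zero, since Proposition \ref{diffprop} tells us this is forced by the differentiability hypothesis on $J$.

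First, I would rewrite the level curve $J(a,b)=c$ in the cleaner form $g(a)-g(b)=g(c)$, which is valid because $g^{-1}$ is a bijection. Differentiating both sides with respect to $a$, treating $b$ as an implicit function of $a$, gives $g^{\prime}(a)-g^{\prime}(b)\,\frac{db}{da}=0$, and since $g^{\prime}(b)\neq 0$ this rearranges to the asserted ODE. Setting $a=c$ in $g(b)=g(a)-g(c)$ yields $g(b)=0=g(\tfrac12)$, and injectivity of $g$ forces $b=\tfrac12$, confirming that the level curve passes through $(c,\tfrac12)$.

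For uniqueness, I would exploit that the ODE is separable: writing it as $g^{\prime}(b)\,db=g^{\prime}(a)\,da$ and antidifferentiating produces $g(b)=g(a)+C$ for some constant $C$. Imposing $b(c)=\tfrac12$ gives $C=g(\tfrac12)-g(c)=-g(c)$, so every solution passing through $(c,\tfrac12)$ coincides with $g(b)=g(a)-g(c)$, i.e.\ with the level curve itself. Since $g^{\prime}$ is continuous and nonvanishing, the right-hand side of the ODE is continuous in $(a,b)$ on $S$, so this separation argument is rigorous; alternatively one could invoke a standard Picard-type uniqueness result.

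There is no substantive obstacle; the only delicate point is making sure $g^{\prime}(b)$ does not vanish, so that both the implicit differentiation and the division of differentials are legitimate, and this is precisely what Proposition \ref{diffprop} supplies. As a sanity check, one may also derive the ODE geometrically from Proposition \ref{jacgrad}: the gradient of $J$ at $(a,b)$ is a positive multiple of $\langle g^{\prime}(a),-g^{\prime}(b)\rangle$, so the tangent to the level curve, being perpendicular to the gradient, is a multiple of $\langle g^{\prime}(b),g^{\prime}(a)\rangle$, giving the same slope $g^{\prime}(a)/g^{\prime}(b)$.
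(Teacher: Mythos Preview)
Your argument is correct and matches the paper's approach: the paper does not give a separate proof but simply records, just before the proposition, that the level curve $J(a,b)=c$ can be rewritten as $g(a)=g(b)+g(c)$, leaving the implicit differentiation and the initial condition as immediate. Your write-up spells out those details and adds the separation-of-variables uniqueness step, and your gradient sanity check parallels the remark the paper makes after the analogous Proposition~\ref{diffeq}. One small quibble: Proposition~\ref{diffprop} (in the converse direction) only guarantees that $g'$ exists and is nonvanishing, not that $g'$ is continuous, so your aside invoking continuity and a Picard-type theorem is not fully justified---but your primary separation argument does not need continuity of $g'$ and stands on its own.
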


Thus the level curves for the Jamesian functions defined in Examples \ref{Ex2} and \ref{Ex3} are given by the differential equations
\[
\frac{db}{da}=\frac{(2a^{2}-2a+1)(b(1-b))^2}{(2b^{2}-2b+1)(a(1-a))^{2}}
\]
and
\[
\frac{db}{da}=\left(\frac{\sin(\pi b)}{\sin(\pi a)}\right)^{2}
\]
respectively.  Likewise, the level curves for any hyper-James function $H_{n}(a,b)$ are given by the differential equation
\[
\frac{db}{da}=\left(\frac{b(1-b)}{a(1-a)}\right)^{n}\text{.}
\]
Figure 5 shows the level curves for the hyper-James function $H_{2}(a,b)$.

\begin{figure}[h]
\scalebox{.8}{\includegraphics{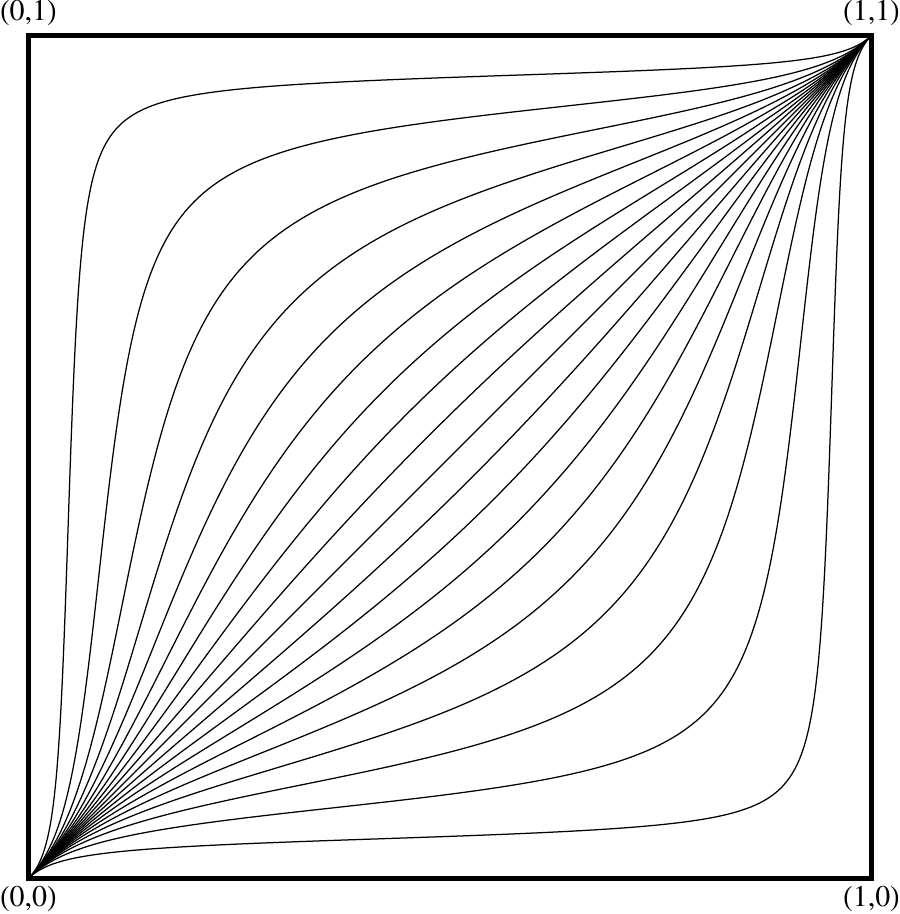}}
\caption{The level curves for the hyper-James function $H_{2}(a,b)$.}
\end{figure}

\section{Final thoughts}

While it is possible to construct additional examples of non-involutive Jamesian functions,
it would be reasonable to focus any further investigation on the involutive case.
Perhaps the most obvious question is whether one can assign any probabilistic significance to the involutive
Jamesian functions we have just introduced, particularly the hyper-James functions.
For instance, could one somehow alter the assumptions underlying Theorem \ref{millertime}
to obtain one of these functions in place of $P(a,b)$?

Within this context, several lines of inquiry seem especially worthwhile:
\begin{enumerate}
\item Does every involutive Jamesian function have the form described in Theorem \ref{jacthm}, for some particular
function $g$?
\item  While it is clear how the involutive property arises mathematically, is there any \textit{a priori} reason
that it should hold, based on the probabilistic interpretation of the James function?
\item Are there any situations for which non-differentiability would
make sense in the setting of an athletic competition?
\end{enumerate}
We would be delighted if this paper motivated other mathematicians (or sports enthusiasts) to
consider any of these questions.

\section*{Acknowledgments}\label{S:ack}

We would never have written this paper if Caleb Garza, a recent alumnus of
Connecticut College, had not decided
to give a senior seminar talk on a topic from sabermetrics.  We are sincerely grateful to him for prompting (or reviving) our
interest in this material and for bringing the work of the third-named author to the attention of the first two.  We would also like
to thank the referees and editors of this paper for providing substantial assistance and guidance.

\renewcommand{\abstractname}{Summary}
 
 \begin{abstract} We investigate the properties of the James function, associated
with Bill James's so-called ``log5 method," which assigns a probability to the
result of a game between two teams based on their respective winning percentages.
We also introduce and study a class of functions,
which we call \textit{Jamesian}, that satisfy the same \textit{a priori} conditions that
were originally used to describe the James function. 
\end{abstract}
 

\begin{thebibliography}{99}

\bibitem{bradleyterry}
Ralph Allan Bradley and Milton E.~Terry, Rank analysis of incomplete block designs. I. The method of
paired comparisons, \emph{Biometrika} \textbf{39} (1952), 324--345.

\bibitem{david}
H.\ A.\ David, \emph{The Method of Paired Comparisons}, 2nd ed., Oxford University Press, New York, 1988.

\bibitem{glickman}
Mark E.\ Glickman, Introductory note to 1928 (= 1929), in \emph{Ernst Zermelo: Collected Works}, Volume II,
edited by Heinz-Dieter Ebbinghaus and Akihiro Kanamori, Springer-Verlag, Berlin, 2013, pp.\ 616--621.

\bibitem{glickmanjones}
Mark E.\ Glickman and Albyn C.\ Jones,  Rating the chess rating system, \emph{Chance} \textbf{12} (1999), no.\ 2, 21--28. 

\bibitem{good}
I.\ J.\ Good, On the marking of chess-players, \emph{Math.\ Gaz.}\ \textbf{39} (1955), 292--296.

\bibitem{supplement}
Christopher N.\ B.\ Hammond, Warren P.\ Johnson, and Steven J.\ Miller, Online supplement to ``The James function."

\bibitem{james0}
Bill James, \emph{1980 Baseball Abstract}, self-published, Lawrence, KS, 1980.

\bibitem{james}
Bill James, \emph{1981 Baseball Abstract}, self-published, Lawrence, KS, 1981.

\bibitem{jech}
T.~Jech, A quantitative theory of preferences:  some results on transition functions,
\emph{Soc.\ Choice Welf.} \textbf{6} (1989), no.\ 4,  301--314.

\bibitem{luce}
R.~Duncan Luce, \emph{Individual Choice Behavior:  A Theoretical Analysis}, John Wiley and Sons, New York, 1959.

\bibitem{miller}
Steven J.~Miller, A justification of the log5 rule for winning percentages, 
\texttt{http://web.williams.edu/Mathematics/sjmiller/public\_html/103/Log5WonLoss\_Paper.pdf}, 2008.

\bibitem{miller1}
Steven J.~Miller, Taylor Corcoran, Jennifer Gossels, Victor Luo, and Jaclyn Porfilio, Pythagoras at the bat, in
\emph{Social Networks and the Economics of Sports}, edited by Panos M.\ Pardalos and Victor Zamaraev, Springer-Verlag, Berlin, 2014, pp.\ 89--113.

\bibitem{royden}
H.~L.~Royden and P.~M.~ Fitzpatrick, \emph{Real Analysis}, 4th ed., Prentice Hall, Boston, 2010.

\bibitem{stob} Michael Stob, A supplement to ``A mathematician's guide to popular sports", \emph{Amer.\ Math.\ Monthly} \textbf{91}
(1984), no.\ 5, 277--282.

\bibitem{zermelo}
E.~Zermelo, Die Berechnung der Turnier-Ergebnisse als ein Maximumproblem der Wahrscheinlichkeitsrechnung,
\emph{Math.\ Z.}\ \textbf{29} (1929), no.\ 1, 436--460; The calculation of the results of a tournament
as a maximum problem in the calculus of probabilities, in \emph{Ernst Zermelo: Collected Works}, Volume II, edited by Heinz-Dieter
Ebbinghaus and Akihiro Kanamori, Springer-Verlag, Berlin, 2013, pp.\ 622--671.

\end{thebibliography}
 \end{document}